\documentclass[11pt]{article}
\usepackage{graphicx} % Required for inserting images
\usepackage[utf8]{inputenc}
\usepackage{titlesec}
\titleformat{\section}
{\normalfont\scshape}
{\thesection.}{0.5em}{\centering}
\titleformat{\subsection}
{\small\scshape}
{\thesubsection.}{0.5em}{\centering}
\titleformat{\subsubsection}
{\small\scshape}
{\thesubsubsection.}{0.5em}{\centering}
\usepackage{tikz-cd}
\usepackage{tikz}
\usepackage[left=3.5cm,right=3.5cm,top=3.5cm,bottom=3.5cm]{geometry}
\usepackage{comment}
\usepackage{amsfonts}
\usepackage{indentfirst}
\renewcommand{\abstract}{\small{\section*{\abstractname}}}
\usepackage{hyperref}
\hypersetup{
    colorlinks,
    citecolor=teal,
    filecolor=teal,
    linkcolor=teal,
    urlcolor=teal
}
\usepackage{amsmath}
\usepackage{amssymb}
\usepackage{amsthm}
\usepackage{mathrsfs}
\newtheorem{theorem}{Theorem}
\newtheorem{mainthm}{Theorem}

\newtheorem{proposition}{Proposition}
\newtheorem{corollary}{Corollary}
\newtheorem{definition}{Definition}
\theoremstyle{remark}

\usepackage{stmaryrd}
\usepackage{quiver}
\usetikzlibrary{nfold}

\usepackage[
backend=biber,
style=alphabetic,
sorting=ynt
]{biblatex}
\title{\Large{\textbf{Complemented Copies of $c_{0}$ in Positive Tensor Products of Banach Lattices}}}
\author{\normalsize{\textsc{Vasily Melnikov}}}
\date{\normalsize{\textsc{August 2026}}}

\begin{document}

\maketitle
\begin{abstract}
     A result of Cembranos states that a non-trivial injective tensor product of a $C$-space contains a complemented copy of $c_{0}$, and in particular fails the Grothendieck property. We establish a positive analogue of the Cembranos theorem for tensor products of Banach lattices. If $E$ and $F$ are infinite dimensional Banach lattices, with $E$ containing $c_{0}$ and $E^{\ast}$ or $F^{\ast}$ having the bounded positive approximation property, then the Wittstock tensor product $E\widetilde{\otimes}_{\vert{\varepsilon}\vert}F$ contains a complemented copy of $c_{0}$. If $F$ is in addition reflexive, then $E\widetilde{\otimes}_{\vert{\varepsilon}\vert}F$ fails the positive Grothendieck property.
\end{abstract}
\section{Introduction}\label{sec:intro}
By a result of Cembranos \cite{cembranos}, an injective tensor product $C(K)\widetilde{\otimes}_{\varepsilon}E$ always contains a complemented copy of $c_{0}$, where $K$ is an infinite compact space, and $E$ is an infinite dimensional Banach space. In particular, $C(K)\widetilde{\otimes}_{\varepsilon}E$ is never a Grothendieck space.
\par
A number of results hint at a more general indecomposability phenomenon, wherein a Grothendieck space often cannot be written as a tensor product of infinite dimensional Banach spaces. We list some examples. In connection with the study of automorphisms of the Calkin algebra, a usually untractable problem (see \cite{farah-calkin,weaver-phil-calkin}), it was asked whether the Calkin algebra is a tensor product of $C^{\ast}$-algebras. Ghasemi \cite{ghasemi} resolved the question in the negative, a result which Kania \cite{kancstar} extended to any $C^{\ast}$-algebra which is a Grothendieck space. For general Banach spaces, it is known that the projective tensor product $E\widetilde{\otimes}_{\pi}F$ is Grothendieck only if $E$ or $F$ is reflexive (see \cite[Proposition 5.3.1]{grospace}), and conjectured that the injective tensor product $E\widetilde{\otimes}_{\varepsilon}F$ is not Grothendieck if it fails to be reflexive (see \cite[p. 1158]{czechgro}). Similar observations have been made for positive tensor products of Banach lattices. For instance, Bu, Craddock, and Ji \cite{ban-latt-gro-lp,ban-latt-gro-lphi} have shown that the Wittstock tensor product $E\widetilde{\otimes}_{\vert{\varepsilon}\vert}\ell_{\Phi}$ is never a Grothendieck space if $E$ is non-reflexive,\footnote{Specifically, they show that $E\widetilde{\otimes}_{\vert{\varepsilon}\vert}\ell_{\Phi}$ is a Grothendieck space iff $E$ is and every positive operator $\ell_{\Phi^{\ast}}\longrightarrow E^{\ast\ast}$ is compact. But if $E$ is a non-reflexive Banach lattice with the Grothendieck property, $E^{\ast\ast}$ contains a lattice copy of $\ell_{\infty}$ (see \cite[Proposition 3.2.7]{grospace}), so this latter condition can only be satisfied if $E$ is reflexive.} where $(\Phi,\Phi^{\ast})$ is a conjugate pair of $\Delta_{2}$-Young functions and $\ell_{\Phi}$ is the associated Orlicz sequence space.
\par
We formulate a general version of the indecomposability principle for tensor products of infinite dimensional Banach lattices $E$ and $F$. Modulo an approximation property, if $E$ contains $c_{0}$, then the Wittstock tensor product $E\widetilde{\otimes}_{\vert{\varepsilon}\vert}F$ contains a complemented copy of $c_{0}$ and thus is not a Grothendieck space.
\begin{mainthm}\label{thm:intro-main}
    Let $E$ and $F$ be infinite dimensional Banach lattices such that $E$ contains $c_{0}$, and $E^{\ast}$ or $F^{\ast}$ has the bounded positive approximation property. Then $E\widetilde{\otimes}_{\vert{\varepsilon}\vert}F$ contains a complemented copy of $c_{0}$ and in particular fails the Grothendieck property. If $F$ is in addition reflexive, then $E\widetilde{\otimes}_{\vert{\varepsilon}\vert}F$ fails the positive Grothendieck property.
\end{mainthm}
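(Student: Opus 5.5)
The plan is to imitate the Cembranos argument, working with positive (regular) operators instead of arbitrary ones. Under the bounded positive approximation property on $E^{\ast}$ or $F^{\ast}$, I will take as the main structural input (presumably established earlier in the paper) that $E\widetilde{\otimes}_{\vert{\varepsilon}\vert}F$ embeds isometrically, as a sublattice, into the space of regular operators $\mathcal{L}^{r}(E^{\ast},F)$ with the regular norm. Concretely, it is identified with the closure of the finite rank positive-spanned operators that are weak$^{\ast}$-to-norm continuous.

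\textbf{Building the sequence.} Since $E$ contains $c_{0}$, a Banach lattice result (Meyer-Nieberg/Lozanovskii) gives a disjoint positive normalized sequence $(e_{n})$ in $E$ equivalent to the $c_{0}$ basis, i.e.\ a lattice copy of $c_{0}$. Since $F$ is infinite dimensional, I choose a normalized sequence $(f_{n})$ in $F$ together with functionals $(f_{n}^{\ast})$ in $F^{\ast}$ so that $(f_{n}^{\ast})$ is weak$^{\ast}$ null. The Josefson–Nissenzweig theorem supplies such functionals with $\|f_{n}^{\ast}\|=1$; I would then pick $f_{n}$ positive with $f_{n}^{\ast}(f_{n})\ge 1/2$. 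To stay inside the positive cone I replace $f_{n}^{\ast}$ by $\vert f_{n}^{\ast}\vert$. This is legitimate only where weak$^{\ast}$-nullness survives; otherwise I keep the signed functionals and use lattice norms only on the $E$ side. I also choose disjoint positive functionals $e_{n}^{\ast}$ with $e_{n}^{\ast}(e_{m})=\delta_{nm}$, obtained by restricting to the bands generated by $e_{n}$, with uniformly bounded norms.

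\textbf{The embedding and the projection.} Define $J:c_{0}\to E\widetilde{\otimes}_{\vert{\varepsilon}\vert}F$ by $J((a_{n}))=\sum a_{n}e_{n}\otimes f_{n}$. Disjointness of the $e_{n}$ gives $\vert\sum a_{n}e_{n}\otimes f_{n}\vert\le \sum\vert a_{n}\vert e_{n}\otimes\vert f_{n}\vert$. The Wittstock norm of the right-hand side is controlled by $\sup_{n}\vert a_{n}\vert$ times $\|\sum e_{n}\|_{c_{0}\text{-copy}}$, so $J$ is bounded. The lower bound comes from pairing with $e_{n}^{\ast}\otimes f_{n}^{\ast}$. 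For the projection, set $P(u)=\big(\langle u, e_{n}^{\ast}\otimes f_{n}^{\ast}\rangle\big)_{n}$. Boundedness into $\ell_{\infty}$ is clear. The substantive claim is that $P$ lands in $c_{0}$, and this is exactly where the weak$^{\ast}$-null choice of $f_{n}^{\ast}$ enters. For an elementary tensor $x\otimes y$ we get $\langle x,e_{n}^{\ast}\rangle\langle y,f_{n}^{\ast}\rangle\to 0$, because $f_{n}^{\ast}(y)\to 0$ and the other factor is bounded. By density and the uniform bound $\|e_{n}^{\ast}\otimes f_{n}^{\ast}\|\le C$ in the dual of the Wittstock tensor product, this holds for all $u$. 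Then $P\circ J$ is the diagonal operator $(a_{n})\mapsto(a_{n}f_{n}^{\ast}(f_{n}))$, which is an isomorphism since $f_{n}^{\ast}(f_{n})\ge 1/2$. Hence $J(P\circ J)^{-1}P$ is a bounded projection onto a copy of $c_{0}$. A complemented $c_{0}$ rules out the Grothendieck property, since the coordinate functionals pulled back by the projection are weak$^{\ast}$ null but not weakly null.

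\textbf{Main obstacle.} I expect the hardest step to be checking that $e_{n}^{\ast}\otimes f_{n}^{\ast}$ defines a uniformly bounded functional on $E\widetilde{\otimes}_{\vert{\varepsilon}\vert}F$. The Wittstock dual is a positive projective-type norm, so elementary tensors of functionals have norm at most $\|e_{n}^{\ast}\|\,\|f_{n}^{\ast}\|$; I will take this from the duality results presumably given earlier, and this is where the approximation property hypothesis is used.

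\textbf{Positive Grothendieck property when $F$ is reflexive.} Here the witnessing weak$^{\ast}$-null sequence must be positive and disjoint. I would use instead $\phi_{n}=e_{n}^{\ast}\otimes g_{n}$, with $g_{n}\ge 0$ normalized and disjoint in $F^{\ast}$ if possible. Reflexivity of $F$ makes $F^{\ast}$ reflexive, so a disjoint normalized positive sequence in $F^{\ast}$ is weakly null, hence weak$^{\ast}$ null. Repeating the argument with $f_{n}$ chosen in $F$ so that $g_{n}(f_{n})\ge 1/2$ shows that $(\phi_{n})$ is weak$^{\ast}$ null in the dual. However, $\langle J(1,1,\dots)^{\ast\ast},\phi_{n}\rangle\ge 1/2$, evaluated in the bidual, so $(\phi_{n})$ is not weakly null. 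This positive disjoint sequence witnesses failure of the positive Grothendieck property.
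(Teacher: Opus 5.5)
Your argument is correct. It reaches the key estimate by a different and more elementary route than the paper.

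\textbf{The paper's route.} The paper gets weak unconditional Cauchyness of $\sum_n e_n\otimes f_n$ (equivalently, boundedness of your $J$) from its Theorem~3, which is proved on the dual side. Blanco's characterization of the BPAP makes $\iota\colon E^{\ast}\widetilde{\otimes}_{\vert\pi\vert}F^{\ast}\to(E\widetilde{\otimes}_{\vert\varepsilon\vert}F)^{\ast}$ an isomorphism onto a weak-star boundedly dense range, on which a $1$-summing estimate is run. The paper then takes Johnson--Rosenthal biorthogonal $f_n$ and obtains the complemented $c_0$ from the Gonz\'alez--Guti\'errez lemma for non-unconditionally-converging operators into $c_0$. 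For the positive Grothendieck part it combines W\'ojtowicz's $\mathfrak{JN}^{+}$ with weak compactness of positive operators into $c_0$ and Orlicz--Pettis.

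\textbf{Your route.} You work on the primal side instead, using $\bigl\vert\sum_{n\le N}a_ne_n\otimes f_n\bigr\vert\le\Vert a\Vert_\infty\sum_{n\le N}e_n\otimes\vert f_n\vert$. The Wittstock norm of a positive tensor equals the operator norm of the positive operator $x^{\ast}\mapsto\sum_n\langle e_n,x^{\ast}\rangle\vert f_n\vert$ from $E^{\ast}$ to $F$. You assert this bound without proof; it is the crux, so write that line out explicitly. It gives $\Vert J(a)\Vert\le\Vert a\Vert_\infty\sup_n\Vert f_n\Vert\sup_{x^{\ast}\in B_{E^{\ast}}}\sum_{n\le N}\vert\langle e_n,x^{\ast}\rangle\vert$ for $a$ supported in $\{1,\dots,N\}$. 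You then write down the projection $J(PJ)^{-1}P$ by hand. In the reflexive case you get a positive weak-star null sequence from a disjoint normalized positive sequence in the reflexive lattice $F^{\ast}$, with $J^{\ast\ast}(\mathbf{1})$ witnessing that $(\phi_n)$ is not weakly null.

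\textbf{What your route buys.} Though you do not notice it, your argument never uses the BPAP. The bound $\Vert e_n^{\ast}\otimes f_n^{\ast}\Vert\le\Vert e_n^{\ast}\Vert\Vert f_n^{\ast}\Vert$ that you attribute to it holds simply because $\vert\varepsilon\vert\ge\varepsilon$. The isometric embedding into regular operators also needs no hypothesis. Your domination works verbatim for any positive weakly unconditionally Cauchy series and any bounded $(f_n)$, so it even gives the paper's Theorem~3 without the approximation property.

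\textbf{Two small repairs.} First, you do not need $f_n\ge0$, since the estimate already uses $\vert f_n\vert$. This matters because a positive $f_n$ with $f_n^{\ast}(f_n)\ge1/2$ need not exist for a signed $f_n^{\ast}$ unless you flip signs. Second, restricting to bands is unavailable when $E$ is not Dedekind complete. Instead take $x_n^{\ast}\ge0$ norming $e_n$ and set $e_n^{\ast}(z)=\sup_k\langle z\wedge ke_n,x_n^{\ast}\rangle$ for $z\ge0$. This gives positive biorthogonal functionals of norm at most $1$; alternatively use Lotz, as the paper does.
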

In fact, reflexivity in the last part of Theorem \ref{thm:intro-main} can be replaced with the weaker positive Josefson-Nissenzweig property introduced by Wójtowicz \cite{positive-jn}.
\par
An immediate consequence of Theorem \ref{thm:intro-main} conjoined with a result of Räbiger \cite{rabiger-gro-ban-latt} is the following.
\begin{mainthm}\label{thm:intro-corr}
    Let $E$ and $F$ be infinite dimensional Banach lattices with $E$ non-reflexive. Then $E\widetilde{\otimes}_{\vert{\varepsilon}\vert}F$ fails to be a Grothendieck space, or both $E^{\ast}$ and $F^{\ast}$ fail the bounded positive approximation property.
\end{mainthm}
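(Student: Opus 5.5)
The plan is to derive Theorem \ref{thm:intro-corr} from Theorem \ref{thm:intro-main} together with the dichotomy for Grothendieck Banach lattices due to Räbiger \cite{rabiger-gro-ban-latt}. We argue by contraposition. Suppose $E\widetilde{\otimes}_{\vert{\varepsilon}\vert}F$ is a Grothendieck space and that $E^{\ast}$ or $F^{\ast}$ has the bounded positive approximation property. We aim to show that $E$ is reflexive.

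First we show that $E$ is itself a Grothendieck space, by exhibiting $E$ as a complemented subspace of $E\widetilde{\otimes}_{\vert{\varepsilon}\vert}F$. Fix a norm-one positive $y\in F_{+}$ and a positive functional $\psi\in F^{\ast}_{+}$ with $\psi(y)=1$; such a pair exists by Hahn--Banach applied to $\vert y\vert$. Then $x\mapsto x\otimes y$ is an isometric embedding $E\to E\widetilde{\otimes}_{\vert{\varepsilon}\vert}F$. The slice map $\mathrm{id}_{E}\otimes\psi$ extends to a bounded (indeed positive) operator $E\widetilde{\otimes}_{\vert{\varepsilon}\vert}F\to E$, since the Wittstock norm dominates the injective norm and $\mathrm{id}\otimes\psi$ is contractive for the injective norm. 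Composing the embedding with the slice map gives the identity on $E$, so $E$ is complemented. Because the Grothendieck property passes to complemented subspaces (quotients), $E$ is Grothendieck.

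Next we invoke Räbiger's theorem: a Banach lattice with the Grothendieck property either is reflexive or contains a (lattice) copy of $c_{0}$. Equivalently, a Grothendieck Banach lattice has order continuous norm only if it is reflexive, since an order continuous Banach lattice not containing $\ell_{1}$ or $c_{0}$ is reflexive, and a Grothendieck space cannot contain complemented $\ell_1$. So if $E$ were non-reflexive, then $E$ would contain $c_{0}$. Theorem \ref{thm:intro-main} would then apply to $E$ and $F$ and yield a complemented copy of $c_{0}$ in $E\widetilde{\otimes}_{\vert{\varepsilon}\vert}F$. That contradicts the Grothendieck property, because $c_{0}$ is not Grothendieck and the property passes to complemented subspaces. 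Hence $E$ is reflexive, and this proves the contrapositive.

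The main point needing care is the exact form of Räbiger's result. If it is stated as ``a Grothendieck Banach lattice without order continuous norm contains $c_0$'' rather than as the full dichotomy, then I would supply the remaining order continuous case separately. That case uses the Lozanovskii/Tzafriri characterization, and the fact that a Grothendieck space contains no complemented copy of $\ell_{1}$. For this one needs that in a Banach lattice with order continuous norm, a lattice copy of $\ell_{1}$ yields a complemented one; alternatively, $\ell_1\subset E$ with $E$ Grothendieck would force $\ell_\infty$-type quotients contradicting order continuity.
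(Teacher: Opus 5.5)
Your proposal is correct and follows the paper's proof. In both, $E$ is complemented in $E\widetilde{\otimes}_{\vert{\varepsilon}\vert}F$ and hence Grothendieck, a non-reflexive Grothendieck Banach lattice contains $c_{0}$, and Theorem~\ref{thm:intro-main} then gives a complemented copy of $c_{0}$, contradicting the Grothendieck property. The differences are minor: the paper gets the copy of $c_{0}$ from Pe\l{}czy\'nski's property (V) (citing Gonz\'alez--Kania, Proposition 3.2.7) rather than from the reflexive-or-$c_{0}$ dichotomy or your Lozanovskii/complemented-$\ell_{1}$ fallback, and you spell out the slice-map projection that the paper simply asserts.
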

In particular, for non-reflexive $E$, and $F$ any of the classical sequence spaces, $E\widetilde{\otimes}_{\vert{\varepsilon}\vert}F$ fails the Grothendieck property—a vast generalization of earlier results for $F$ a reflexive Orlicz sequence space (see \cite{ban-latt-gro-lp,ban-latt-gro-lphi}).
\par
In Theorem \ref{thm:intro-main}, it is assumed that either $E^{\ast}$ or $F^{\ast}$ has the bounded positive approximation property in the sense of Nielsen \cite{pos-approx-property}. The appearance of approximation properties in this context is not unprecedented; for example, Bu and Li \cite{reflex-regular} and Bu \cite{kalton-regular-compact} consider reflexivity and the positive Grothendieck property for Fremlin tensor products under such assumptions.
\par
The reason our approach is dependent on approximation properties is that we use the following result on weak unconditional convergence in the Wittstock tensor product, which depends crucially on an earlier characterization of the bounded positive approximation property by Blanco \cite{blanco}.
\begin{mainthm}\label{thm:intro-wucs}
    Assume $E^{\ast}$ or $F^{\ast}$ has the bounded positive approximation property. Let $\sum_{n=1}^{\infty}e_{n}$ be a weakly unconditionally Cauchy series in $E$ with each $e_{n}$ positive, and let $(f_{n})_{n=1}^{\infty}$ be norm bounded. Then the series $\sum_{n=1}^{\infty}e_{n}\otimes f_{n}$ is weakly unconditionally Cauchy in $E\widetilde{\otimes}_{\vert{\varepsilon}\vert}F$.
\end{mainthm}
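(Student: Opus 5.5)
A series is weakly unconditionally Cauchy (WUC) in a Banach space $X$ if and only if $\sup_{N}\sup_{|\theta_{n}|\le 1}\bigl\|\sum_{n=1}^{N}\theta_{n}x_{n}\bigr\|<\infty$. So it suffices to bound, uniformly in $N$ and in scalars $|\theta_{n}|\le 1$, the Wittstock norm of $\sum_{n=1}^{N}\theta_{n}e_{n}\otimes f_{n}$. The plan is to compute this norm through the representation of $E\widetilde{\otimes}_{\vert{\varepsilon}\vert}F$ as a closed sublattice of operators. The approximation hypothesis, via Blanco's characterization, is what makes this representation isometric. For definiteness, assume $E^{\ast}$ has the bounded positive approximation property; the case of $F^{\ast}$ is symmetric with the roles of $E$ and $F$ exchanged.

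\textbf{Step 1: identify the norm.} By Blanco's characterization, under the bounded positive approximation property the Wittstock norm of a tensor $u=\sum_{i}x_{i}\otimes y_{i}$ is given by its regular norm as an operator. Concretely, $u$ corresponds to the finite-rank operator $T_{u}\colon E^{\ast}\to F$, $x^{\ast}\mapsto\sum_{i}x^{\ast}(x_{i})y_{i}$, and
\[
\|u\|_{\vert{\varepsilon}\vert}=\bigl\|\,|T_{u}|\,\bigr\|_{r}=\sup\Bigl\{\bigl\|\sum_{i}x^{\ast}(x_{i})y_{i}\bigr\|: x^{\ast}\in E^{\ast}_{+},\ \|x^{\ast}\|\le 1\Bigr\}
\]
up to a constant depending on the approximation bound. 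The intended use of the approximation property is this: positive finite-rank operators approximate the identity of $E^{\ast}$ with uniformly bounded norm, which lets us compute the modulus in the finite-rank setting. One then dominates the modulus of $T_{u}$ pointwise on positive functionals by the positive operator $x^{\ast}\mapsto\sum_{i}x^{\ast}(|x_{i}|)\,|y_{i}|$, or by a suitable lattice majorant of it.

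\textbf{Step 2: estimate.} Take $u=\sum_{n\le N}\theta_{n}e_{n}\otimes f_{n}$ with $e_{n}\ge 0$. For $x^{\ast}\ge 0$ with $\|x^{\ast}\|\le 1$, the domination from Step 1 gives
\[
\bigl\|\,|T_{u}|\,x^{\ast}\bigr\|\le\Bigl\|\sum_{n}x^{\ast}(e_{n})\,|f_{n}|\Bigr\|\le\sup_{m}\|f_{m}\|\sum_{n}x^{\ast}(e_{n}).
\]
Since the series $\sum e_{n}$ is WUC, $\sum_{n}|x^{\ast}(e_{n})|\le C\|x^{\ast}\|$ for all $x^{\ast}$. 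For positive $x^{\ast}$ this reads $x^{\ast}\bigl(\sum_{n\le N}e_{n}\bigr)\le C$, which is the uniform bound we need. Positivity of the $e_{n}$ is exactly what makes $x^{\ast}(e_{n})\ge 0$, so the scalars $\theta_{n}$ are absorbed into $|f_{n}|$ with no loss. A general $x^{\ast}$ splits as $x^{\ast}=x^{\ast}_{+}-x^{\ast}_{-}$, so the supremum over the positive unit ball of $E^{\ast}$ controls the regular norm up to a factor of $2$.

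\textbf{Main obstacle.} The crux is Step 1: establishing that the Wittstock norm of a finite tensor is dominated, via Blanco's result, by the supremum over positive functionals of the norm of the majorant $\sum_{n}x^{\ast}(e_{n})|f_{n}|$. This amounts to showing that the modulus of $T_{u}$ in the relevant operator lattice satisfies $|T_{u}|\le\sum_{n}e_{n}\otimes|f_{n}|$ with controlled norm. The lattice inequality itself is clear from the positive-operator picture, since each $\theta_{n}e_{n}\otimes f_{n}$ is dominated by $e_{n}\otimes|f_{n}|$. What needs care is the norm identification: the Wittstock norm of a finite tensor must coincide (up to the approximation constant) with the norm it carries in the Dedekind-complete lattice of regular operators, and this is exactly where the bounded positive approximation property enters. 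I would first try to derive this by approximating the identity of $E^{\ast}$ with uniformly bounded positive finite-rank operators, and then pass to the limit to conclude.
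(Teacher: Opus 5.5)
Your argument is correct, and it takes a genuinely different route from the paper. It also proves more than you think.

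The step you flag as the crux is not where any approximation property enters. The natural map $E\otimes_{\vert{\varepsilon}\vert}F\to\mathscr{L}_{r}(E^{\ast},F)$ is isometric for the regular norm for all Banach lattices; this is recorded in \S2 of the paper (Meyer-Nieberg, Proposition 3.8.7). Blanco's theorem is about the dual instead: it says that $E^{\ast}\widetilde{\otimes}_{\vert{\pi}\vert}F^{\ast}\to(E\widetilde{\otimes}_{\vert{\varepsilon}\vert}F)^{\ast}$ is an isomorphic embedding.

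With $\Vert{T}\Vert_{r}=\inf\{\Vert{S}\Vert : S\geq 0,\ \vert{Tx^{\ast}}\vert\leq S\vert{x^{\ast}}\vert\}$, your Step 2 is already a complete proof. For $u=\sum_{n\leq N}\theta_{n}e_{n}\otimes f_{n}$, take $S=T_{\tilde{u}}$ with $\tilde{u}=\sum_{n\leq N}e_{n}\otimes\vert{f_{n}}\vert$. Positivity of $e_{n}$ gives $\vert{x^{\ast}(e_{n})}\vert\leq\vert{x^{\ast}}\vert(e_{n})$, so $\vert{T_{u}x^{\ast}}\vert\leq S\vert{x^{\ast}}\vert$. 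Hence $\Vert{u}\Vert_{\vert{\varepsilon}\vert}\leq\Vert{S}\Vert\leq\sup_{m}\Vert{f_{m}}\Vert\cdot\sup_{N}\Vert{\textstyle\sum_{n\leq N}e_{n}}\Vert$, uniformly in $N$ and in $\vert{\theta_{n}}\vert\leq 1$. So you can drop the plan of approximating the identity of $E^{\ast}$, and you can drop the BPAP hypothesis: the statement holds for arbitrary Banach lattices $E$, $F$.

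Two corrections:
\begin{itemize}
\item The display in Step 1 is wrong as written. $\sup\{\Vert{T_{u}x^{\ast}}\Vert : x^{\ast}\in E^{\ast}_{+},\ \Vert{x^{\ast}}\Vert\leq 1\}$ is equivalent to the injective norm, not the regular norm. In Step 2 you correctly use $\Vert{\vert{T_{u}}\vert x^{\ast}}\Vert$ instead.
\item The two cases are not ``symmetric'', because positivity is imposed only on the $e_{n}$. This is moot once no approximation hypothesis is needed.
\end{itemize}

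The paper works entirely on the dual side. It uses Blanco's theorem to realize a weak-star boundedly dense subspace of $(E\widetilde{\otimes}_{\vert{\varepsilon}\vert}F)^{\ast}$ as $E^{\ast}\widetilde{\otimes}_{\vert{\pi}\vert}F^{\ast}$ with an equivalent norm. It then dominates positive elements there by $\sum_{i}\lambda_{i}x^{\ast}_{i}\otimes y^{\ast}_{i}$ (Bu--Buskes), and closes with a $1$-summing estimate plus weak-star lower semicontinuity of the norm. That machinery is exactly what forces the approximation hypothesis. Your primal domination $\vert{T_{u}x^{\ast}}\vert\leq T_{\tilde{u}}\vert{x^{\ast}}\vert$ avoids duality altogether. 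It reduces everything to the operator (that is, injective) norm of a positive tensor, which the weakly unconditionally Cauchy hypothesis on $\sum_{n}e_{n}$ controls directly. The result is a shorter proof that needs no hypothesis beyond positivity of the $e_{n}$.
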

Theorem \ref{thm:intro-wucs} may be of independent interest; its conclusion is a property the Wittstock norm shares with the injective norm but which fails for the Fremlin and projective norms.
\par
The paper is structured as follows. Our main result, Theorem \ref{thm:intro-main}, is presented in \S\ref{sec:main} and proved in \S\ref{sec:proof-main}, while the instrumental Theorem \ref{thm:intro-wucs} is formulated and proved in \S\ref{sec:wuc-wittstock}.
\section{Notation and Preliminaries}\label{sec:notation-prelim}
For a Banach space $E$, we write $B_{E}$ for the unit ball, $S_{E}$ for the unit sphere, and $E^{\ast}$ for the dual space. If $F$ is another Banach space, the space of linear operators between $E$ and $F$ is denoted $\mathscr{L}(E,F)$; the operator norm of $T\in\mathscr{L}(E,F)$ is denoted $\Vert{T}\Vert$.
\par
A series $\sum_{n=1}^{\infty}x_{n}$ in $E$ is said to be weakly unconditionally Cauchy if
\begin{equation*}
    \sum_{n=1}^{\infty}\vert{\langle{x_{n},x^{\ast}}\rangle}\vert<\infty
\end{equation*}
for each $x^{\ast}\in E^{\ast}$. Weak unconditional Cauchyness is weaker than unconditional convergence. For example, if $(x_{n})_{n=1}^{\infty}$ is a $c_{0}$ basis in $E$, $\sum_{n=1}^{\infty}x_{n}$ is weakly unconditionally Cauchy but not unconditionally convergent. An operator $T\in\mathscr{L}(E,F)$ is said to be unconditionally converging if $T$ sends weakly unconditionally Cauchy series to unconditionally convergent series.
\par
If $E$ and $F$ are Banach lattices, and $T\in\mathscr{L}(E,F)$, $T$ is said to be positive if $T(E_{+})\subseteq F_{+}$; the set of positive operators is denoted $\mathscr{L}(E,F)_{+}$. The space of regular operators is denoted $\mathscr{L}_{r}(E,F)=\mathscr{L}(E,F)_{+} - \mathscr{L}(E,F)_{+}$. We define the regular norm on $\mathscr{L}_{r}(E,F)$ by
\begin{equation*}
    \Vert{T}\Vert_{r}=\inf\left\{\Vert{S}\Vert_{\mathrm{op}}:S\in\mathscr{L}(E,F)_{+},\vert{T(x)}\vert\leq S(\vert{x}\vert)\forall x\in E\right\},
\end{equation*}
for all $T\in\mathscr{L}_{r}(E,F)$.
\subsection{Tensor Products and Tensor Norms}
Given two normed spaces $E$ and $F$, the tensor product $E\otimes F$ can be equipped with a number of norms (see Ryan \cite{introtensor} for an overview of the theory). Given a norm $\alpha$ on $E\otimes F$, we denote by $E\otimes_{\alpha}F$ the normed space $(E\otimes F,\Vert{\cdot}\Vert_{\alpha})$, and by $E\widetilde{\otimes}_{\alpha}F$ the completion of $E\otimes_{\alpha} F$. We denote by $\varepsilon$ the injective norm, and by $\pi$ the projective norm.
\par
For Banach lattices $E$ and $F$, there are two natural norms—the Fremlin and Wittstock norms—on $E\otimes F$ under which $E\widetilde{\otimes}_{\alpha}F$ is a Banach lattice. The Fremlin norm is denoted by $\vert{\pi}\vert$, and the Wittstock norm is denoted by $\vert{\varepsilon}\vert$. We refer the reader to Labuschagne \cite{order-reasonable} for their definitions. The relation between the Wittstock and Fremlin norm on $E\otimes F$ is that $\vert{\varepsilon}\vert\leq\vert{\pi}\vert$.
\par
The pairing
\begin{equation*}
    (x\otimes y,T)\longmapsto\langle{y,T(x)}\rangle
\end{equation*}
for $x\in E$, $y\in F$, and $T\in\mathscr{L}_{r}(E,F^{\ast})$ induces an isometric isomorphism $(E{\otimes}_{\vert{\pi}\vert}F)^{\ast}\simeq\mathscr{L}_{r}(E,F^{\ast})$ (see \cite[\S1I]{fremlin-ban-lat}); in the sequel, we generally identify these spaces.
\par
The Wittstock norm agrees with the regular norm on the finite rank operators. The natural injection $E\otimes_{\vert{\varepsilon}\vert} F\longrightarrow\mathscr{L}_{r}(E^{\ast},F)$ is an isometric embedding (see \cite[Proposition 3.8.7]{meyer-nie}).
\subsection{The Grothendieck Property}
Grothendieck \cite{groth} introduced the following property of Banach spaces.
\begin{definition}\label{def:gp}
    A Banach space $E$ has the Grothendieck property if every weak-star null sequence in $E^{\ast}$ is weakly null.
\end{definition}
According to \cite[Théorème 9]{groth}, complemented subspaces of $\ell_{\infty}(\Gamma)$ have the Grothendieck property.
\par
The following weakening of Definition \ref{def:gp} has been proposed for Banach lattices (see, e.g., \cite{posit-gro-property-subsets}).
\begin{definition}
    A Banach lattice $E$ has the positive Grothendieck property if every positive weak-star null sequence in $E^{\ast}$ is weakly null.
\end{definition}
For example, the space $c$ of convergent sequences has the positive Grothendieck property (see \cite[p. 6]{posit-gro-property-subsets}), but fails the Grothendieck property.
\subsection{Approximation Properties in Banach Lattices}
Nielsen \cite{pos-approx-property} introduced the following approximation property for Banach lattices, which strengthens the ordinary formulation by ensuring the approximations are positive.
\begin{definition}
Let $1\leq\lambda<\infty$. A Banach lattice $E$ is said to have the $\lambda$-bounded positive approximation property ($\lambda$-BPAP) if, for every compact subset $C\subseteq E$ and $\varepsilon>0$, there exists a finite rank positive operator $T:E\longrightarrow E$ such that
\begin{equation*}
    \Vert{x-T(x)}\Vert_{E}\leq\varepsilon
\end{equation*}
for all $x\in C$, and
\begin{equation*}
    \Vert{T}\Vert\leq\lambda.
\end{equation*}
\end{definition}
\begin{definition}
    A Banach lattice $E$ is said to have the bounded positive approximation property (BPAP) if $E$ has the $\lambda$-BPAP for some $1\leq\lambda<\infty$.
\end{definition}
The BPAP implies the ordinary bounded approximation property. The existence of a Banach lattice without the approximation property, such as the one constructed by Szankowski \cite{no-ap-ban-latt}, therefore provides examples of Banach lattices without the BPAP. Only a few Banach lattices are known which do not satisfy the BPAP; indeed, it is not even known whether there are Banach lattices without the BPAP which have the ordinary bounded approximation property (see, e.g., \cite[p. 14]{oikh-ap}).
\subsection{The Positive Josefson-Nissenzweig Property}
The Josefson-Nissenzweig theorem states that in an infinite dimensional dual Banach space there is a weak-star null sequence in the unit sphere. In analogy with the Josefson-Nissenzweig theorem, the following property was introduced by Wójtowicz \cite{positive-jn}.
\begin{definition}
    A Banach lattice $E$ has the positive Josefson-Nissenzweig property, written $E\in\mathfrak{JN}^{+}$, if there exists a weak-star null $(x^{\ast}_{n})_{n=1}^{\infty}\subseteq S_{E^{\ast}}\cap E^{\ast}_{+}$.
\end{definition}
\begin{proposition}\label{prop:has-positive-jn}
    If $E$ can be mapped positively onto an infinite dimensional order continuous Banach lattice, then $E\in\mathfrak{JN}^{+}$. In particular, if $E$ is reflexive and infinite dimensional, then $E\in\mathfrak{JN}^{+}$.
\end{proposition}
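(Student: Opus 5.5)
Suppose $T:E\longrightarrow G$ is a positive surjection onto an infinite dimensional order continuous Banach lattice $G$. The plan is to find a weak-star null sequence of positive norm-one functionals on $G$, pull it back to $E^{\ast}$ via $T^{\ast}$, and renormalize. The pullback step is routine. Since $T$ is surjective, the open mapping theorem gives $\delta>0$ with $\Vert{T^{\ast}g^{\ast}}\Vert\geq\delta\Vert{g^{\ast}}\Vert$ for all $g^{\ast}\in G^{\ast}$. Positivity of $T$ gives $T^{\ast}(G^{\ast}_{+})\subseteq E^{\ast}_{+}$, and $T^{\ast}$ is weak-star to weak-star continuous. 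So if $(g_{n}^{\ast})\subseteq S_{G^{\ast}}\cap G^{\ast}_{+}$ is weak-star null, then $x_{n}^{\ast}=T^{\ast}g_{n}^{\ast}/\Vert{T^{\ast}g_{n}^{\ast}}\Vert$ lies in $S_{E^{\ast}}\cap E^{\ast}_{+}$. Since the normalizing factors are bounded by $\delta^{-1}$, the sequence $(x_{n}^{\ast})$ is still weak-star null. The problem therefore reduces to showing that an infinite dimensional order continuous Banach lattice $G$ lies in $\mathfrak{JN}^{+}$.

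For $G$ order continuous and infinite dimensional, I would first produce a disjoint normalized sequence $(u_{n})\subseteq G_{+}$; every infinite dimensional Banach lattice has one. For each $n$, pick $g_{n}^{\ast}\in G^{\ast}_{+}$ of norm one with $\langle{u_{n},g_{n}^{\ast}}\rangle=1$. I would then replace $g_{n}^{\ast}$ by its restriction to the band $B_{n}=\{u_{n}\}^{\perp\perp}$. Order continuity makes every band a projection band, with band projection $P_{n}$, so the restriction is $h_{n}^{\ast}=g_{n}^{\ast}\circ P_{n}$. This functional is positive, satisfies $\Vert{h_{n}^{\ast}}\Vert\leq1$, and has $\langle{u_{n},h_{n}^{\ast}}\rangle=1$, so it has norm one.

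The key step is to show $h_{n}^{\ast}\to0$ weak-star. It suffices to treat $x\in G_{+}$. The bands $B_{n}$ are pairwise disjoint, so the components $P_{n}x$ are disjoint and satisfy $\sum_{n\leq N}P_{n}x\leq x$. The partial sums form an increasing order-bounded sequence. In an order continuous Banach lattice, which is $\sigma$-Dedekind complete with order continuous norm, such a sequence converges in norm, and hence $\Vert{P_{n}x}\Vert\to0$. Therefore $0\leq\langle{x,h_{n}^{\ast}}\rangle=\langle{P_{n}x,g_{n}^{\ast}}\rangle\leq\Vert{P_{n}x}\Vert\to0$, as required.

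The main obstacle is justifying this norm convergence: an increasing order-bounded sequence in an order continuous lattice is norm Cauchy. This is a standard fact; the cleanest route is the characterization of order continuity via "every order-bounded disjoint sequence is norm null", applied to the disjoint sequence $(P_{n}x)$, which lies in $[0,x]$.

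For the final assertion, observe that a reflexive Banach lattice is order continuous, since it contains no copy of $c_{0}$. Taking $T$ to be the identity then gives $E\in\mathfrak{JN}^{+}$.
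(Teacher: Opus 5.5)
Your proof is correct. It differs from the paper mainly in that the paper gives no argument here: it simply invokes \cite[Theorem 1]{positive-jn}. You make the proposition self-contained by splitting it into two steps.

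\textbf{Reduction to $E=G$.} Surjectivity of $T$ enters only through the lower bound $\Vert T^{\ast}g^{\ast}\Vert\geq\delta\Vert g^{\ast}\Vert$ supplied by the open mapping theorem. Positivity enters only through $T^{\ast}(G^{\ast}_{+})\subseteq E^{\ast}_{+}$. Weak-star continuity of $T^{\ast}$, together with the uniform bound $\delta^{-1}$ on the normalizing factors, then preserves weak-star nullness.

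\textbf{Construction in $G$.} You take a normalized disjoint sequence $(u_{n})\subseteq G_{+}$ and positive norming functionals $g_{n}^{\ast}$. You then compress them to $h_{n}^{\ast}=g_{n}^{\ast}\circ P_{n}$ on the pairwise disjoint principal bands $\{u_{n}\}^{\perp\perp}$. Order continuity is used in exactly two ways: principal bands are projection bands, and order-bounded disjoint sequences are norm null. Together these give $0\leq\langle x,h_{n}^{\ast}\rangle\leq\Vert P_{n}x\Vert\to0$ for $x\in G_{+}$.

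The auxiliary facts you rely on are standard and correctly applied:
\begin{itemize}
\item every infinite dimensional Banach lattice has a disjoint normalized positive sequence;
\item a positive vector has a positive norming functional;
\item band projections are contractions;
\item reflexive implies order continuous. This holds because a Banach lattice without order continuous norm contains an order-bounded disjoint sequence bounded away from zero, which spans a copy of $c_{0}$.
\end{itemize}

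The citation buys brevity and whatever extra generality Wójtowicz's theorem carries. Your route shows precisely which lattice features produce the positive weak-star null sequence. As a small bonus, it shows that in $G^{\ast}$ this sequence can be taken pairwise disjoint, since the $h_{n}^{\ast}$ live on disjoint bands. Disjointness need not survive the pullback by $T^{\ast}$, but your argument does not need it.
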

\begin{proof}
    See \cite[Theorem 1]{positive-jn}.
\end{proof}
An easy way to generate Banach lattices in $\mathfrak{JN}^{+}$ is via direct sums: if $E$ is any Banach lattice, and $F$ is reflexive and infinite dimensional, then $E\oplus F\in\mathfrak{JN}^{+}$ (e.g., $\ell_{\infty}\oplus\ell_{2}$).
\section{The Main Result}\label{sec:main}
The main result of this article is the following.
\begin{theorem}\label{thm:main}
    Let $E$ and $F$ be infinite dimensional Banach lattices such that $E^{\ast}$ or $F^{\ast}$ has the BPAP. Then:
    \begin{enumerate}
        \item If $E$ contains $c_{0}$, then $E\widetilde{\otimes}_{\vert{\varepsilon}\vert}F$ contains a complemented copy of $c_{0}$.
        \item If $E$ contains $c_{0}$ and $F\in\mathfrak{JN}^{+}$, then $E\widetilde{\otimes}_{\vert{\varepsilon}\vert}F$ contains a complemented copy of $c_{0}$ and fails the positive Grothendieck property.
    \end{enumerate}
\end{theorem}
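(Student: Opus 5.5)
The plan is to produce a $c_{0}$-sequence of elementary tensors $e_{n}\otimes f_{n}$ together with a bounded biorthogonal system of functionals $T_{n}$ that is weak-star null, and then to apply the standard complementation criterion.

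\emph{Step 1: a positive $c_{0}$-sequence in $E$.} Since $E$ contains $c_{0}$, a classical result of Meyer-Nieberg implies that $E$ contains a \emph{lattice} copy of $c_{0}$. So there are disjoint $e_{n}\in E_{+}$, with $\Vert e_{n}\Vert=1$, spanning $c_{0}$ isomorphically. In particular $\sum_{n}e_{n}$ is weakly unconditionally Cauchy. Choose $e_{n}^{\ast}\in E^{\ast}_{+}$ with $\Vert e_{n}^{\ast}\Vert\leq C$ and $\langle e_{m},e_{n}^{\ast}\rangle=\delta_{mn}$. This can be done by taking the coordinate functionals on the lattice copy, extending them by Hahn--Banach, and then replacing each by its positive part restricted to the band generated by $e_{n}$; disjointness ensures these vanish on $e_{m}$ for $m\neq n$. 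The sequence $(e_{n}^{\ast})$ is bounded, but it need not be weak-star null.

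\emph{Step 2: a pairing sequence in $F$.} For part 2, use $F\in\mathfrak{JN}^{+}$ to take a weak-star null sequence $(f_{n}^{\ast})\subseteq S_{F^{\ast}}\cap F^{\ast}_{+}$, and pick $f_{n}\in B_{F}\cap F_{+}$ with $\langle f_{n},f_{n}^{\ast}\rangle\geq 1/2$. For part 1 only, it suffices that $F$ is infinite dimensional. Josefson--Nissenzweig gives a weak-star null normalized sequence $(f_{n}^{\ast})$ in $F^{\ast}$, and $f_{n}\in B_{F}$ with $\langle f_{n},f_{n}^{\ast}\rangle\geq1/2$. Set
\begin{equation*}
    x_{n}=e_{n}\otimes f_{n}, \qquad T_{n}=e_{n}^{\ast}\otimes f_{n}^{\ast},
\end{equation*}
where $T_{n}$ is the rank one operator $x\mapsto\langle x,e_{n}^{\ast}\rangle f_{n}^{\ast}$, viewed as a functional on $E\widetilde{\otimes}_{\vert\varepsilon\vert}F$. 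The Wittstock norm is dominated by the Fremlin norm, and the latter dual is $\mathscr{L}_{r}(E,F^{\ast})$; but we want functionals bounded for $\vert\varepsilon\vert$. For this, use the embedding $E\otimes_{\vert\varepsilon\vert}F\subseteq\mathscr{L}_{r}(E^{\ast},F)$ and the evaluation $u\mapsto\langle u(e_{n}^{\ast}),f_{n}^{\ast}\rangle$. This is bounded by $\Vert e_{n}^{\ast}\Vert\,\Vert f_{n}^{\ast}\Vert\,\Vert u\Vert_{r}$, and it is positive when $e_{n}^{\ast},f_{n}^{\ast}\geq0$. We then have $T_{m}(x_{n})=\delta_{mn}\langle f_{n},f_{n}^{\ast}\rangle$. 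After rescaling, $(x_{n},T_{n})$ is biorthogonal, with $(x_{n})$ and $(T_{n})$ bounded.

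\emph{Step 3: $(x_{n})$ is a $c_{0}$-sequence.} By Theorem \ref{thm:intro-wucs}, the BPAP hypothesis makes $\sum_{n}x_{n}$ weakly unconditionally Cauchy in $E\widetilde{\otimes}_{\vert\varepsilon\vert}F$. Since $T_{n}(x_{n})\geq1/2$, the $x_{n}$ are bounded away from zero. By the Bessaga--Pełczyński theorem, $(x_{n})$ then has a subsequence equivalent to the unit vector basis of $c_{0}$. Here this is immediate from the biorthogonal functionals, which give lower estimates $\Vert\sum a_{n}x_{n}\Vert\geq c\max_{n}\vert a_{n}\vert$.

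\emph{Step 4: complementation and weak-star nullness.} The main obstacle is to show that $(T_{n})$ is weak-star null on $E\widetilde{\otimes}_{\vert\varepsilon\vert}F$. Once that holds, the map $P(u)=\sum_{n}T_{n}(u)x_{n}$ sends $u$ into $c_{0}$-span and is a bounded projection onto $[x_{n}]$. On elementary tensors, $T_{n}(x\otimes y)=\langle x,e_{n}^{\ast}\rangle\langle y,f_{n}^{\ast}\rangle\to0$, because the first factor is bounded and the second tends to $0$. Since $(T_{n})$ is bounded and $E\otimes F$ is dense, weak-star nullness follows. This proves part 1. For part 2, the $T_{n}$ are positive and weak-star null, yet $T_{n}(x_{n})\geq1/2$ while $(x_{n})$ is weakly null in $c_{0}$. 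Hence no subsequence of $(T_{n})$ is weakly null, because a weakly null sequence in the dual of a space containing a complemented $c_{0}$ would give $T_{n}(x_{n})\to0$ via the projection. More directly, compose with $P^{\ast}$: this reduces the question to the coordinate functionals in $\ell_{1}=c_{0}^{\ast}$ restricted to $c_{0}$, and these are not weakly null in $\ell_{1}$. So the positive Grothendieck property fails. The one delicate point is the positivity of $T_{n}$ in part 2. It is exactly why positive $f_{n}^{\ast}$ are needed, which is where $\mathfrak{JN}^{+}$ enters, and why the $e_{n}^{\ast}$ must be arranged to be positive in Step 1.
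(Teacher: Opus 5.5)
Your proof is correct and follows essentially the paper's route: the same functionals $e_{n}^{\ast}\otimes f_{n}^{\ast}$, built from positive coordinate functionals of a lattice copy of $c_{0}$ in $E$ and a Josefson--Nissenzweig sequence in $F^{\ast}$ (positive in part 2), are tested against the series $\sum_{n}e_{n}\otimes f_{n}$, which Theorem \ref{thm:intro-wucs} makes weakly unconditionally Cauchy. The differences are only in execution:
\begin{itemize}
\item you get positive $e_{n}^{\ast}$ from disjointness rather than Lotz's theorem (the positive part of the Hahn--Banach extension may exceed $1$ at $e_{n}$, but your later rescaling absorbs this);
\item you notice that biorthogonality already comes from the $E$-side, so Johnson--Rosenthal is not needed;
\item you write the projection explicitly instead of citing Gonz\'alez--Guti\'errez;
\item for part 2 you exhibit the positive, weak-star null, non-weakly-null sequence $(T_{n})$ directly, instead of passing through weak compactness of positive operators into $c_{0}$ and Orlicz--Pettis.
\end{itemize}
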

The proof of Theorem \ref{thm:main} is contained in \S\ref{sec:proof-main}. The proof relies on results from \S\ref{sec:wuc-wittstock}, some of which may be interesting in their own right.
\par
Both parts of Theorem \ref{thm:main} are sharp. If both $E$ and $F$ are reflexive, it is possible for $E\widetilde{\otimes}_{\vert{\varepsilon}\vert}F$ to be reflexive (see \cite[Example 4.2]{type-tensor}) and thus to contain no copies of $c_{0}$, let alone a complemented copy. As for the second part of the theorem, if $F\notin\mathfrak{JN}^{+}$, it is possible for $E\widetilde{\otimes}_{\vert{\varepsilon}\vert}F$ to satisfy the positive Grothendieck property, as the following example demonstrates.
\begin{proposition}\label{prop:c-spaces-pgp}
    Let $K$ and $L$ be infinite compact Hausdorff spaces. Then $C(K)\widetilde{\otimes}_{\vert{\varepsilon}\vert}C(L)$ contains a complemented copy of $c_{0}$, but satisfies the positive Grothendieck property.
\end{proposition}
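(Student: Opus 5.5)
The plan is to identify $C(K)\widetilde{\otimes}_{\vert{\varepsilon}\vert}C(L)$ concretely and then verify each of the two claims on the concrete model. Since $C(K)$ and $C(L)$ are AM-spaces, the Wittstock norm on $C(K)\otimes C(L)$ coincides with the injective norm: for positive functionals on $C(K)$ the regular norm and the operator norm agree, and the injective norm is already a lattice norm on $C(K)\otimes C(L)$. Hence $C(K)\widetilde{\otimes}_{\vert{\varepsilon}\vert}C(L)\simeq C(K)\widetilde{\otimes}_{\varepsilon}C(L)\simeq C(K\times L)$, isometrically and as lattices, by Stone--Weierstrass density of $C(K)\otimes C(L)$ in $C(K\times L)$. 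I would verify this by computing the Wittstock norm of $u=\sum_i x_i\otimes y_i$ through the embedding into $\mathscr{L}_r(C(K)^{\ast},C(L))$ recalled in the preliminaries. The associated operator $T_u$ satisfies $\vert T_u\mu\vert\le T_{\vert u\vert}\vert\mu\vert$, where $\vert u\vert$ is computed in $C(K\times L)$. The regular norm of an operator into a $C(L)$ space with positive majorant $T_{\vert u\vert}$ is attained on point masses, which gives $\Vert u\Vert_{\vert\varepsilon\vert}=\sup_{K\times L}\vert u\vert=\Vert u\Vert_\infty$.

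With this identification, the first claim follows by any of several routes. The quickest is Cembranos' theorem quoted in the introduction, since $C(K\times L)=C(K)\widetilde{\otimes}_{\varepsilon}C(L)$ with $K$ infinite and $C(L)$ infinite dimensional. A more internal route is Theorem \ref{thm:main}(1), since $C(K)$ contains $c_0$ and $C(K)^{\ast}=M(K)$, an AL-space, has the BPAP. Either way, $C(K)\widetilde{\otimes}_{\vert{\varepsilon}\vert}C(L)$ contains a complemented copy of $c_0$.

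For the positive Grothendieck property, we now work in $C(M)$ with $M=K\times L$ compact. Let $(\mu_n)$ be a positive weak-star null sequence in $M(M)$. Testing against the constant function $1$ gives $\Vert\mu_n\Vert=\mu_n(1)\to0$, so $\mu_n\to0$ in norm and hence weakly. Thus every $C(M)$, and in particular our tensor product, has the positive Grothendieck property. This also ties in with Proposition \ref{prop:has-positive-jn}: $C(L)\notin\mathfrak{JN}^{+}$, because positive norm-one functionals have $\mu(1)=1$.

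The main obstacle is the identification of the Wittstock norm with the sup norm on $K\times L$. If that computation proves fiddly, a fallback is to show the positive Grothendieck property directly. The element $1_K\otimes 1_L$ has norm $1$ in the Wittstock tensor product, and it is a strong unit that dominates the unit ball: for $\Vert u\Vert_{\vert\varepsilon\vert}\le1$ one needs $\vert u\vert\le 1\otimes1$, which again holds because of the AM structure of both factors. The Banach lattice is then an AM-space with unit, hence a $C(M)$-space by Kakutani's theorem, and the argument with $\mu_n(1)\to0$ applies verbatim.
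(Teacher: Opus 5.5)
Your proposal is correct and follows the paper's own proof. Both identify $C(K)\widetilde{\otimes}_{\vert{\varepsilon}\vert}C(L)$ with $C(K\times L)$, take the complemented $c_{0}$ from Cembranos, and deduce the positive Grothendieck property from the strong order unit; your $\mu_{n}(1)\to 0$ argument is just the proof of the Galindo--Miranda fact the paper cites. Your explicit check of the norm identification via the positive majorant $T_{\vert u\vert}$ fills in a step the paper only asserts.
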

$C(K)$ and $C(L)$ both contain copies of $c_{0}$ and their duals have the BPAP. Thus, for $E=C(K)$ and $F=C(L)$, every condition from Theorem \ref{thm:main} besides the positive Josefson-Nissenzweig property is met. Remark also that if $K=\beta\Gamma$, where $\beta\Gamma$ is the Stone-Čech compactification of a discrete space $\Gamma$, then $C(K)$ has the Grothendieck property.
\begin{proof}
    Because $C(K)\widetilde{\otimes}_{\vert{\varepsilon}\vert}C(L)\simeq C(K\times L)$, $C(K)\widetilde{\otimes}_{\vert{\varepsilon}\vert}C(L)$ has a strong order unit and thus satisfies the positive Grothendieck property (see \cite[p. 6]{posit-gro-property-subsets}). On the other hand, $C(K\times L)$ contains a complemented copy of $c_{0}$ (see \cite{cembranos}).
\end{proof}
We now note some corollaries to Theorem \ref{thm:main}.
\begin{corollary}\label{corr:first}
    Let $E$ and $F$ be Banach lattices such that $E$ contains $c_{0}$, $F$ is reflexive, and $E^{\ast}$ or $F^{\ast}$ has the BPAP. Then $E\widetilde{\otimes}_{\vert{\varepsilon}\vert}F$ fails the positive Grothendieck property, and contains a complemented copy of $c_{0}$.
\end{corollary}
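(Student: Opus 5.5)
This corollary should follow directly from part (2) of Theorem~\ref{thm:main}, with Proposition~\ref{prop:has-positive-jn} supplying the missing hypothesis. We only need to check that every assumption of part (2) holds.

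\emph{Infinite dimensionality of $E$.} Since $E$ contains a copy of $c_{0}$, it is infinite dimensional.

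\emph{Infinite dimensionality of $F$.} The statement does not require this explicitly, so we handle the finite dimensional case separately. Suppose $F$ is finite dimensional, say $F\neq 0$. A finite dimensional Archimedean Banach lattice is lattice isomorphic to $\mathbb{R}^{k}$ with the coordinatewise order. Then $E\widetilde{\otimes}_{\vert{\varepsilon}\vert}F$ is isomorphic to $E^{k}$, and this decomposition is lattice isomorphic as well. The conclusion could then fail: if $E=\ell_{\infty}$, then $E^{k}$ has the Grothendieck property and contains no complemented copy of $c_{0}$. So the corollary must be read with $F$ infinite dimensional, as in Theorem~\ref{thm:intro-main}, where that assumption is explicit. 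I would state this reading at the outset. The degenerate case $F=0$ is likewise excluded.

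\emph{$F\in\mathfrak{JN}^{+}$.} With $F$ reflexive and infinite dimensional, Proposition~\ref{prop:has-positive-jn} gives $F\in\mathfrak{JN}^{+}$ immediately. One could also argue directly: a reflexive Banach lattice is order continuous, since it contains no copy of $c_{0}$, so the identity map is a positive surjection onto an infinite dimensional order continuous lattice. That is exactly the first part of Proposition~\ref{prop:has-positive-jn}.

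\emph{Approximation hypothesis.} The assumption that $E^{\ast}$ or $F^{\ast}$ has the BPAP is carried over verbatim from the statement.

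With all hypotheses verified, part (2) of Theorem~\ref{thm:main} yields both conclusions at once. First, $E\widetilde{\otimes}_{\vert{\varepsilon}\vert}F$ contains a complemented copy of $c_{0}$. Second, it fails the positive Grothendieck property.

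The only real obstacle is the interpretive point about infinite dimensionality of $F$. Everything else is a direct citation.
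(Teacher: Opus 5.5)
Your proposal is correct and is exactly the paper's argument: Proposition~\ref{prop:has-positive-jn} gives $F\in\mathfrak{JN}^{+}$, and part (2) of Theorem~\ref{thm:main} then yields both conclusions. Your caveat about dimension is also well taken: the paper's one-line proof silently needs $F$ infinite dimensional, since both cited results assume it, and your example $E=\ell_{\infty}$, $F=\mathbb{R}^{k}$ shows the corollary genuinely fails without that hypothesis.
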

\begin{proof}
    The result is a joint consequence of Proposition \ref{prop:has-positive-jn} and Theorem \ref{thm:main}.
\end{proof}
\begin{corollary}\label{corr:atomic-pgp}
    Let $E$ and $F$ be Banach lattices such that $E$ contains $c_{0}$, and $F$ is reflexive and atomic. Then $E\widetilde{\otimes}_{\vert{\varepsilon}\vert}F$ fails the positive Grothendieck property, and contains a complemented copy of $c_{0}$.
\end{corollary}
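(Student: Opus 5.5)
Corollary \ref{corr:atomic-pgp} should reduce to Corollary \ref{corr:first}. It has the same conclusion and the same hypotheses on $E$, with $F$ again reflexive. The only change is that the hypothesis ``$E^{\ast}$ or $F^{\ast}$ has the BPAP'' is replaced by ``$F$ is atomic''. So the whole task is to show that if $F$ is reflexive and atomic, then $F^{\ast}$ has the BPAP. Infinite dimensionality of $F$ is implicitly needed, as in Theorem \ref{thm:main}; I will assume it.

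First, I will show $F^{\ast}$ is atomic and order continuous. Since $F$ is reflexive, both $F$ and $F^{\ast}$ are KB-spaces and hence order continuous. For atomicity, let $(u_{i})_{i\in I}$ be a maximal disjoint family of atoms in $F$. Order continuity gives that each $x\in F_{+}$ is the norm limit of the finite partial sums of its atomic components, $x=\sum_{i}\varphi_{i}(x)u_{i}$, where $\varphi_{i}\geq 0$ is the coordinate functional of the band projection onto $\mathrm{span}\{u_{i}\}$. The $\varphi_{i}$ are pairwise disjoint atoms of $F^{\ast}$. Every $x^{\ast}\in F^{\ast}$ is determined by its values on the $u_{i}$, and since $F^{\ast}$ is order continuous, $x^{\ast}=\sum_{i}x^{\ast}(u_{i})\varphi_{i}$ with norm convergence of the finite partial sums. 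Hence $F^{\ast}$ is atomic, with atoms $\varphi_{i}$.

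Second, I will show an order continuous atomic Banach lattice $G=F^{\ast}$ has the $1$-BPAP. For a finite set $J\subseteq I$, let $P_{J}$ be the band projection onto $\mathrm{span}\{\varphi_{j}:j\in J\}$. It is positive, of finite rank, and satisfies $\Vert{P_{J}}\Vert\leq 1$, since $0\leq P_{J}\leq\mathrm{Id}$ and band projections are contractive. Order continuity plus atomicity give $P_{J}g\to g$ in norm along the net of finite subsets $J$, for each $g\in G$. Because the $P_{J}$ are uniformly bounded, pointwise convergence upgrades to uniform convergence on compact sets by the standard $\varepsilon/3$ net argument: cover $C$ by finitely many balls of radius $\varepsilon/3$ and pick $J$ that works for all the centres. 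Thus $F^{\ast}$ has the $1$-BPAP.

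Then Corollary \ref{corr:first} applies directly and gives both conclusions. The only step needing care is the first, namely identifying the atoms of $F^{\ast}$ and justifying norm convergence of the expansions. Order continuity of $F$ and $F^{\ast}$, coming from reflexivity, handles this. Alternatively, I could cite the known fact that order continuous atomic lattices have the BPAP, together with the fact that the dual of a reflexive atomic lattice is atomic.
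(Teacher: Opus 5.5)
Your proposal is correct and follows the paper's route: the paper also reduces to Corollary~\ref{corr:first} by showing that $F^{\ast}$ has the BPAP, but it only cites Bu's argument for this, whereas you carry the argument out explicitly (atomicity and order continuity of $F^{\ast}$, then finite band projections giving the $1$-BPAP). You are also right to flag that $F$ must be infinite dimensional, since the corollary inherits that hypothesis from Theorem~\ref{thm:main}.
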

\begin{proof}
    Using the same argument as Bu \cite[p. 2464]{kalton-regular-compact}, $F^{\ast}$ has the BPAP. Thus, Corollary \ref{corr:first} is applicable, which proves the claim.
\end{proof}
\begin{corollary}\label{corr:notgro-or-fails-bpap}
    Let $E$ and $F$ be infinite dimensional Banach lattices with $E$ non-reflexive. Then $E\widetilde{\otimes}_{\vert{\varepsilon}\vert}F$ fails to be a Grothendieck space, or both $E^{\ast}$ and $F^{\ast}$ fail the BPAP.
\end{corollary}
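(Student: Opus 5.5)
Since this is a dichotomy, I will argue by contraposition. Assume that $E^{\ast}$ or $F^{\ast}$ has the BPAP, and that $E\widetilde{\otimes}_{\vert{\varepsilon}\vert}F$ is a Grothendieck space. The goal is then to show that $E$ must be reflexive. The route is to feed Räbiger's theorem \cite{rabiger-gro-ban-latt} into part 1 of Theorem \ref{thm:main}. In the form I intend to use it, Räbiger's theorem says that a Banach lattice with the Grothendieck property either is reflexive or contains a lattice copy of $c_{0}$. Here "contains" must be read as "contains a sublattice isomorphic to $c_{0}$", or at least a copy of $c_{0}$.

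\textbf{Step 1 (the Grothendieck property passes to $E$).} I would show that $E$ is a complemented subspace of $E\widetilde{\otimes}_{\vert{\varepsilon}\vert}F$.
\begin{enumerate}
\item Fix $f_{0}\in F_{+}$ with $\Vert f_{0}\Vert=1$.
\item Fix $\varphi\in F^{\ast}_{+}$ with $\langle f_{0},\varphi\rangle=1$ and $\Vert\varphi\Vert=1$. Such a $\varphi$ exists because the positive functionals norm positive elements.
\item Define $J(e)=e\otimes f_{0}$. This is an isometric embedding, since the Wittstock norm is a reasonable crossnorm (or because $e\otimes f_{0}$ has regular norm $\Vert e\Vert\Vert f_{0}\Vert$).
\item Define $P=\mathrm{id}_{E}\otimes\varphi$, so $P(x\otimes y)=\langle y,\varphi\rangle x$. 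It extends boundedly to the completion. To see this, use the isometric embedding into $\mathscr{L}_{r}(E^{\ast},F)$ noted in \S\ref{sec:notation-prelim}: for a finite-rank $T$, set $P(T)=(\varphi\circ T)^{\ast}$ restricted to $E$. Since $\vert{\varphi\circ T}\vert\le\varphi\circ S$ for any dominating $S$, the map $P$ has norm at most $\Vert\varphi\Vert$.
\item Then $PJ=\mathrm{id}_{E}$, so $JP$ is a projection onto $J(E)$.
\end{enumerate}
The Grothendieck property passes to complemented subspaces: a weak-star null sequence in $E^{\ast}$ lifts through $P^{\ast}$ to a weak-star null sequence, which is weakly null, and applying $J^{\ast}$ brings it back. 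Hence $E$ is Grothendieck.

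\textbf{Step 2 (combine Räbiger with Theorem \ref{thm:main}).} Since $E$ is a non-reflexive Grothendieck Banach lattice, Räbiger's theorem applies. If his result is stated only for the dichotomy "reflexive or contains $\ell_{\infty}$-type sublattices", I would instead use the standard Meyer-Nieberg characterization: a Banach lattice is reflexive iff it contains no sublattice isomorphic to $c_{0}$ or $\ell_{1}$. The $\ell_{1}$ case must be excluded. If $E$ contained a lattice copy of $\ell_{1}$, then $E^{\ast}$ would admit a positive projection onto... Rather than argue that directly, I would use Räbiger's result that the dual of a Grothendieck lattice is a KB-space. In either case the conclusion is that $E$ contains $c_{0}$.

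Now part 1 of Theorem \ref{thm:main} gives a complemented copy of $c_{0}$ in $E\widetilde{\otimes}_{\vert{\varepsilon}\vert}F$. A Grothendieck space cannot contain a complemented copy of $c_{0}$: the projection would map it onto $c_{0}$, and quotients of Grothendieck spaces are Grothendieck, while $c_{0}$ is not (its unit vector basis in $\ell_{1}=c_{0}^{\ast}$ is weak-star null but not weakly null). This contradiction proves the corollary.

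The main obstacle is Step 2: extracting exactly "non-reflexive Grothendieck lattice $\Rightarrow$ contains $c_{0}$" from Räbiger. I expect this to be the precise content of the cited result, namely that a Grothendieck Banach lattice is reflexive iff it contains no copy of $c_{0}$, and I would invoke it in that form.
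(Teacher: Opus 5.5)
Your proposal is correct and follows the paper's proof. $E$ is complemented in $E\widetilde{\otimes}_{\vert{\varepsilon}\vert}F$ and so inherits the Grothendieck property; a non-reflexive Grothendieck Banach lattice must contain $c_{0}$; and part 1 of Theorem \ref{thm:main} then gives a complemented copy of $c_{0}$, which no Grothendieck space can have. The only difference is in the middle step: the paper invokes \cite[Proposition 3.2.7]{grospace} to get Pe\l{}czy\'nski's property (V) for $E$ (so the non-weakly-compact identity on $E$ fixes a copy of $c_{0}$), whereas your fallback through order continuity of $E^{\ast}$ and the lattice characterization of reflexivity reaches the same conclusion.
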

\begin{proof}
    Assume $E^{\ast}$ or $F^{\ast}$ has the BPAP. Supposing for contradiction that $E\widetilde{\otimes}_{\vert{\varepsilon}\vert}F$ has the Grothendieck property, $E$ must also have the Grothendieck property, being a complemented subspace of a Grothendieck space. In view of \cite[Proposition 3.2.7]{grospace}, $E$ has Pełczyński’s property (V), and so must contain $c_{0}$. Theorem \ref{thm:main} now implies $E\widetilde{\otimes}_{\vert{\varepsilon}\vert}F$ fails the Grothendieck property, a contradiction.
\end{proof}
\begin{corollary}
    Let $E$ and $F$ be infinite dimensional Banach lattices, with $E$ non-reflexive and $F$ a reflexive atomic Banach lattice. Then $E\widetilde{\otimes}_{\vert{\varepsilon}\vert}F$ fails the Grothendieck property.
\end{corollary}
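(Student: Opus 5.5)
The plan is to reduce to Corollary \ref{corr:notgro-or-fails-bpap}, so the only thing to check is that its hypotheses hold. By assumption $E$ and $F$ are infinite dimensional and $E$ is non-reflexive. Corollary \ref{corr:notgro-or-fails-bpap} then says that either $E\widetilde{\otimes}_{\vert{\varepsilon}\vert}F$ fails the Grothendieck property, or both $E^{\ast}$ and $F^{\ast}$ fail the BPAP. It therefore suffices to show that $F^{\ast}$ has the BPAP; this excludes the second alternative and gives the claim.

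For the BPAP of $F^{\ast}$, I would use the fact already invoked in the proof of Corollary \ref{corr:atomic-pgp}: by the argument of Bu \cite[p. 2464]{kalton-regular-compact}, the dual of a reflexive atomic Banach lattice has the BPAP. I would still sketch why this is true, since it is the only substantive input.

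Since $F$ is reflexive, it is order continuous. Because $F$ is also atomic, its atoms $(u_{i})_{i\in I}$ form an unconditional basis consisting of pairwise disjoint positive vectors. For a finite set $J\subseteq I$, let $P_{J}$ be the band projection onto $\mathrm{span}\{u_{i}:i\in J\}$. Each $P_{J}$ is positive, has finite rank, and satisfies $\Vert{P_{J}}\Vert\leq 1$. Moreover $P_{J}\to\mathrm{id}_{F}$ pointwise as $J$ increases, by order continuity.

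The adjoints $P_{J}^{\ast}$ are then positive contractive finite rank operators on $F^{\ast}$. The dual $F^{\ast}$ is again reflexive, and hence order continuous and atomic, with the biorthogonal functionals as its atoms. Under this identification, $P_{J}^{\ast}$ is the band projection of $F^{\ast}$ onto the span of the corresponding finite set of atoms, so $P_{J}^{\ast}\to\mathrm{id}_{F^{\ast}}$ pointwise in norm.

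Bounded pointwise convergence upgrades to uniform convergence on compact sets by a routine equicontinuity argument. Hence $F^{\ast}$ has the $1$-BPAP.

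The only step needing care is this last identification, namely that the adjoint projections converge pointwise in norm on $F^{\ast}$ rather than merely weak-star. Reflexivity is exactly what guarantees this. With it in hand, the corollary follows directly from Corollary \ref{corr:notgro-or-fails-bpap}.
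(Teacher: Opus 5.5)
Your proof is correct and follows the paper's route exactly: the paper also deduces the corollary from Corollary~\ref{corr:notgro-or-fails-bpap} after noting, via Bu's argument \cite[p. 2464]{kalton-regular-compact}, that $F^{\ast}$ has the BPAP. Your sketch of that fact is sound: the adjoints $P_{J}^{\ast}$ of the finite band projections are positive finite-rank contractions converging pointwise in norm because $F^{\ast}$ is order continuous. One wording fix: atomicity of $F^{\ast}$ follows from $F$ being atomic and order continuous, not from reflexivity alone.
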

\begin{proof}
    Using the same argument as Bu \cite[p. 2464]{kalton-regular-compact}, $F^{\ast}$ has the BPAP. Corollary \ref{corr:notgro-or-fails-bpap} now yields the claim.
\end{proof}
\section{Weakly Unconditionally Cauchy Series in the Wittstock Norm}\label{sec:wuc-wittstock}
An important property of the injective tensor product $E\widetilde{\otimes}_{\varepsilon}F$ is that it preserves weak unconditional Cauchyness.
\begin{proposition}\label{prop:wuc-injective}
    Let $\sum_{n}e_{n}$ be a weakly unconditionally Cauchy series in $E$, and let $(f_{n})_{n=1}^{\infty}\subseteq B_{F}$. Then the series $\sum_{n=1}^{\infty}e_{n}\otimes f_{n}$ is weakly unconditionally Cauchy in $E\widetilde{\otimes}_{\varepsilon}F$.
\end{proposition}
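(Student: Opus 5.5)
The approach is to use the standard characterization of weakly unconditionally Cauchy series: $\sum_{n}x_{n}$ is weakly unconditionally Cauchy in a Banach space $X$ if and only if
\begin{equation*}
    \sup\left\{\Big\Vert\sum_{n\in A}\theta_{n}x_{n}\Big\Vert : A\subseteq\mathbb{N}\text{ finite},\ \vert\theta_{n}\vert\leq 1\right\}<\infty .
\end{equation*}
Equivalently, it suffices to find $C<\infty$ with $\sum_{n=1}^{N}\vert\langle x_{n},x^{\ast}\rangle\vert\leq C\Vert x^{\ast}\Vert$ for all $N$ and all $x^{\ast}\in X^{\ast}$. One direction is the uniform boundedness principle applied to the operators $x^{\ast}\mapsto(\langle x_{n},x^{\ast}\rangle)_{n}\in\ell_{1}$; the other is immediate. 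Since $E\otimes_{\varepsilon}F$ is dense in $E\widetilde{\otimes}_{\varepsilon}F$ and the relevant partial sums are finite tensors, we only need to bound the injective norm of the finite sums $\sum_{n\in A}\theta_{n}e_{n}\otimes f_{n}$.

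\textbf{Step 1.} Apply the characterization to $\sum_{n}e_{n}$ in $E$. This gives
\begin{equation*}
    C:=\sup_{x^{\ast}\in B_{E^{\ast}}}\sum_{n=1}^{\infty}\vert\langle e_{n},x^{\ast}\rangle\vert<\infty .
\end{equation*}

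\textbf{Step 2.} Compute the injective norm via the usual formula $\Vert u\Vert_{\varepsilon}=\sup\{\vert\langle u,x^{\ast}\otimes y^{\ast}\rangle\vert : x^{\ast}\in B_{E^{\ast}},\,y^{\ast}\in B_{F^{\ast}}\}$. For a finite $A$ and scalars $\vert\theta_{n}\vert\leq1$ we get
\begin{equation*}
    \Big\Vert\sum_{n\in A}\theta_{n}e_{n}\otimes f_{n}\Big\Vert_{\varepsilon}
    \leq\sup_{x^{\ast},y^{\ast}}\sum_{n\in A}\vert\langle e_{n},x^{\ast}\rangle\vert\,\vert\langle f_{n},y^{\ast}\rangle\vert
    \leq\sup_{x^{\ast}}\sum_{n\in A}\vert\langle e_{n},x^{\ast}\rangle\vert\leq C,
\end{equation*}
where the middle inequality uses $\Vert f_{n}\Vert\leq1$.

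\textbf{Step 3.} The bound in Step 2 is uniform in $A$ and in the choice of the $\theta_{n}$. By the characterization, the series $\sum_{n}e_{n}\otimes f_{n}$ is therefore weakly unconditionally Cauchy in $E\widetilde{\otimes}_{\varepsilon}F$.

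No real obstacle is expected. The one point to state carefully is the equivalence between weak unconditional Cauchyness and uniformly bounded signed finite sums, which is a Banach–Steinhaus argument. Note that the argument works precisely because the injective norm is tested only against elementary functionals $x^{\ast}\otimes y^{\ast}$. This is exactly what fails for the Wittstock norm, where a positivity and approximation argument will be needed instead.
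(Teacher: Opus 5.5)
Your proof is correct and complete. The paper gives no proof of Proposition \ref{prop:wuc-injective}; it quotes it as a standard fact. Your argument is the standard one: a Banach--Steinhaus reduction to uniformly bounded signed finite sums, then the formula for $\Vert\cdot\Vert_{\varepsilon}$ tested on elementary functionals $x^{\ast}\otimes y^{\ast}$. The closest proof in the paper is that of the Wittstock analogue, Theorem \ref{thm:wittstock-wucs}, which works on the dual side instead. It uses the BPAP, via Blanco's theorem, to make $E^{\ast}\widetilde{\otimes}_{\vert\pi\vert}F^{\ast}$ weak-star boundedly dense in $(E\widetilde{\otimes}_{\vert\varepsilon\vert}F)^{\ast}$. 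It then dominates a positive $u$ there by $\sum_{i}\lambda_{i}x^{\ast}_{i}\otimes y^{\ast}_{i}$.

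Your closing remark undersells your own method, though: it appears to extend to the Wittstock norm with no approximation hypothesis. Suppose every $e_{n}\geq 0$. The tensor $v=\sum_{n\in A}\theta_{n}e_{n}\otimes f_{n}$ corresponds to the operator $T_{v}:E^{\ast}\to F$, $T_{v}(x^{\ast})=\sum_{n\in A}\theta_{n}\langle e_{n},x^{\ast}\rangle f_{n}$. Since $\vert\langle e_{n},x^{\ast}\rangle\vert\leq\langle e_{n},\vert x^{\ast}\vert\rangle$, one has $\vert T_{v}(x^{\ast})\vert\leq S(\vert x^{\ast}\vert)$ for the positive operator $S(x^{\ast})=\sum_{n\in A}\langle e_{n},x^{\ast}\rangle\vert f_{n}\vert$. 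For $x^{\ast}\in B_{E^{\ast}}$ we get $\Vert S(x^{\ast})\Vert\leq\Vert S(\vert x^{\ast}\vert)\Vert\leq\sum_{n}\langle e_{n},\vert x^{\ast}\vert\rangle\leq C$, so $\Vert S\Vert\leq C$. The paper recalls in \S\ref{sec:notation-prelim} that the Wittstock norm is the regular norm on finite-rank operators. Hence $\Vert v\Vert_{\vert\varepsilon\vert}\leq\Vert S\Vert\leq C$, uniformly in $A$ and $\theta$. So positivity of the $e_{n}$ is genuinely needed, as Proposition \ref{prop:sharp-wucs} shows. But your primal estimate gives the conclusion of Theorem \ref{thm:wittstock-wucs} without the BPAP assumption that the paper's dual-side argument requires.
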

The same is not true for the projective norm: if $(e_{n})_{n=1}^{\infty}$ are the coordinate vectors in $\mathbb{R}^{\mathbb{N}}$, then the tensor diagonal sequence $(e_{n}\otimes e_{n})_{n=1}^{\infty}$ is an $\ell_{1}$-basis in $c_{0}\widetilde{\otimes}_{\pi}\ell_{1}$ (see \cite[Proposition 5.2]{tensor-diag}).\footnote{In view of \cite[Theorem 2B]{fremlin-ban-lat}, a similar negative conclusion holds for the Fremlin tensor product $c_{0}\widetilde{\otimes}_{\vert{\pi}\vert}\ell_{1}$.}
\par
We observe, under a positivity condition on the $e_{n}$'s, a similar result to Proposition \ref{prop:wuc-injective} for the Wittstock tensor norm.
\begin{theorem}\label{thm:wittstock-wucs}
    Assume $E^{\ast}$ or $F^{\ast}$ has the BPAP. Let $\sum_{n}e_{n}$ be a weakly unconditionally Cauchy series in $E$ with each $e_{n}$ positive, and let $(f_{n})_{n=1}^{\infty}\subseteq B_{F}$. Then the series $\sum_{i=1}^{\infty}e_{n}\otimes f_{n}$ is weakly unconditionally Cauchy in $E\widetilde{\otimes}_{\vert{\varepsilon}\vert}F$.
\end{theorem}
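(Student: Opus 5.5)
We need to show that for every $\phi\in(E\widetilde{\otimes}_{\vert{\varepsilon}\vert}F)^{\ast}$ we have $\sum_{n}\vert\phi(e_{n}\otimes f_{n})\vert<\infty$. Equivalently, since a series is weakly unconditionally Cauchy exactly when its finite partial sums with signs are bounded, it suffices to bound
\begin{equation*}
    \sup_{N}\ \sup_{\theta_{n}=\pm1}\Big\Vert{\sum_{n=1}^{N}\theta_{n}e_{n}\otimes f_{n}}\Big\Vert_{\vert{\varepsilon}\vert}.
\end{equation*}
The plan is to estimate this norm directly, using the isometric embedding $E\otimes_{\vert{\varepsilon}\vert}F\longrightarrow\mathscr{L}_{r}(E^{\ast},F)$ and the fact that the Wittstock norm equals the regular norm on finite rank tensors. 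The tensor $u=\sum_{n\le N}\theta_{n}e_{n}\otimes f_{n}$ corresponds to the finite rank operator $T_{u}(x^{\ast})=\sum_{n}\theta_{n}\langle e_{n},x^{\ast}\rangle f_{n}$.

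To bound $\Vert T_{u}\Vert_{r}$, I would try to dominate $T_{u}$ by a positive operator. For $x^{\ast}\in E^{\ast}$,
\begin{equation*}
    \vert T_{u}(x^{\ast})\vert\leq\sum_{n}\vert\langle e_{n},x^{\ast}\rangle\vert\,\vert f_{n}\vert\leq\sum_{n}\langle e_{n},\vert x^{\ast}\vert\rangle\,\vert f_{n}\vert,
\end{equation*}
where the second inequality uses the positivity of the $e_{n}$. The operator $S(x^{\ast})=\sum_{n}\langle e_{n},x^{\ast}\rangle\vert f_{n}\vert$ is positive on $E^{\ast}$. Its norm is the supremum of $\sum_{n}\langle e_{n},x^{\ast}\rangle\Vert f_{n}\Vert$-type quantities, and we have
\begin{equation*}
    \Vert S(x^{\ast})\Vert\leq\sum_{n}\langle e_{n},x^{\ast}\rangle=\Big\langle\sum_{n\le N}e_{n},x^{\ast}\Big\rangle\leq\Big\Vert\sum_{n\le N}e_{n}\Big\Vert
\end{equation*}
for $x^{\ast}\in B_{E^{\ast}}\cap E^{\ast}_{+}$. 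This quantity is uniformly bounded in $N$ because the series is weakly unconditionally Cauchy. Since a positive operator attains its norm on positive elements, $\Vert S\Vert\le C:=\sup_{N}\Vert\sum_{n\le N}e_{n}\Vert$.

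The subtlety is that the regular-norm definition in the paper requires $\vert T(x^{\ast})\vert\le S(\vert x^{\ast}\vert)$, which is exactly what we obtained. Hence $\Vert u\Vert_{\vert\varepsilon\vert}=\Vert T_{u}\Vert_{r}\le C$ uniformly in $N$ and in the signs. Then for any functional $\phi$, taking $\theta_{n}=\operatorname{sign}\phi(e_{n}\otimes f_{n})$ gives $\sum_{n\le N}\vert\phi(e_{n}\otimes f_{n})\vert\le C\Vert\phi\Vert$.

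The main obstacle I anticipate is justifying the identification of the Wittstock norm with the regular norm of $T_{u}$ as an operator $E^{\ast}\to F$, in the precise form needed. The regular norm might be computed within the finite-rank class, or might require a dominating operator with values in $F$ rather than $F^{\ast\ast}$; the operator $S$ above does map into $F$, which suggests the bound holds. This is presumably where the BPAP hypothesis on $E^{\ast}$ or $F^{\ast}$ enters, via Blanco's characterization: the finite-rank regular norm agrees with the regular norm in $\mathscr{L}_{r}(E^{\ast},F)$. If the direct embedding fact as stated in the preliminaries suffices, the BPAP is only needed to pass from the finite rank identification to the embedding. In that case I would cite that step and otherwise follow the domination argument above.
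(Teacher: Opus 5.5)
Your proposal is correct, and it takes a different and much more elementary route than the paper.

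\textbf{Your route.} You work in the primal space. For $u=\sum_{n\le N}\theta_n e_n\otimes f_n$, the positive tensor $w=\sum_{n\le N}e_n\otimes\vert f_n\vert$ satisfies $w\pm u=\sum_{n\le N}e_n\otimes(\vert f_n\vert\pm\theta_n f_n)\ge 0$. Equivalently, $\vert T_u(x^{\ast})\vert\le T_w(\vert x^{\ast}\vert)$, and positivity of $T_w$ gives $\Vert T_w\Vert\le\sup_N\Vert\sum_{n\le N}e_n\Vert$. The regular-norm description of $\vert\varepsilon\vert$ then bounds all signed partial sums uniformly.

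\textbf{The paper's route.} It works in the dual. It uses the BPAP, via Blanco, to embed $E^{\ast}\widetilde{\otimes}_{\vert\pi\vert}F^{\ast}$ isomorphically as a weak-star boundedly dense subspace of $(E\widetilde{\otimes}_{\vert\varepsilon\vert}F)^{\ast}$. It then dominates positive elements there by $\sum_i\lambda_i x_i^{\ast}\otimes y_i^{\ast}$ (Bu--Buskes), estimates through a $1$-summing norm, and passes to weak-star limits by lower semicontinuity. As a by-product this describes a large part of the dual, but for the statement at hand it is strictly more costly.

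\textbf{What your route buys.} The approximation property never enters. The only fact you use is $\Vert u\Vert_{\vert\varepsilon\vert}\le\Vert T_w\Vert$. This holds for arbitrary Banach lattices: the paper's preliminaries state the isometric embedding $E\otimes_{\vert\varepsilon\vert}F\to\mathscr{L}_r(E^{\ast},F)$ with no approximation hypothesis.

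\textbf{Your closing worry.} It is unfounded, and you should drop the speculation that the BPAP is needed for the identification. You only need an upper bound. Your $T_w$ is finite rank, $F$-valued and induced by a tensor in $E\otimes F$. So it is admissible whichever class the infimum runs over: positive operators into $F$, positive operators into $F^{\ast\ast}$, or positive tensors in $E\otimes F$. Equivalently, $\vert u\vert\le w$ in the Fremlin tensor product, the Wittstock norm is a lattice norm, and a positive tensor's Wittstock norm is computed by positive unit functionals.

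\textbf{Consequence.} Your argument proves the statement for all Banach lattices $E,F$. The proof of Theorem~\ref{thm:main} uses the BPAP only through this result, so the hypothesis can be removed there as well.

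\textbf{Minor wording point.} The sentence ``its norm is the supremum of \dots-type quantities'' should be stated as the inequality you actually use.
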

Given a topological vector space $(E,\tau)$ and $D\subseteq E$, let us say that $C\subseteq D$ is \textit{boundedly dense} in $D$ if
\begin{equation*}
    D\subseteq\bigcup_{B}\overline{B\cap C}^{\tau}
\end{equation*}
where $B$ ranges over all bounded subsets of $E$ (if $\tau$ is the weak-star topology on a dual Banach space, these are exactly the norm-bounded sets). If $D=E$, we simply say that $C$ is boundedly dense. Goldstine's theorem implies that $E$ is boundedly dense in its bidual $E^{\ast\ast}$, where $E^{\ast\ast}$ is given the weak-star topology.
\begin{proof}
    Let $\lambda>0$ be a constant for the bounded positive approximation property of $E^{\ast}$ or $F^{\ast}$. In view of \cite[Theorem 4.2]{blanco} and \cite[Corollary 4.3]{blanco}, the natural positive contraction $\iota:E^{\ast}\widetilde{\otimes}_{\vert{\pi}\vert}F^{\ast}\longrightarrow(E\widetilde{\otimes}_{\vert{\varepsilon}\vert}F)^{\ast}$ is invertible on its range $\mathrm{ran}(\iota)$, and $\Vert{\iota^{-1}}\Vert\leq\lambda$.
    \par
    We claim that $\mathrm{ran}(\iota)$ is weak-star boundedly dense. Under the usual identification
    \begin{equation*}
        (E^{\ast}\widetilde{\otimes}_{\vert{\pi}\vert}F^{\ast})^{\ast}\simeq\mathscr{L}_{r}(E^{\ast},F^{\ast\ast}),
    \end{equation*}
     we write $\xi:E^{\ast}\widetilde{\otimes}_{\vert{\pi}\vert}F^{\ast}\longrightarrow\mathscr{L}_{r}(E^{\ast},F^{\ast\ast})^{\ast}$ for the bidual embedding. Denote by $\eta$ the canonical Riesz isometry $E\widetilde{\otimes}_{\vert{\varepsilon}\vert}F^{\ast\ast}\longrightarrow\mathscr{L}_{r}(E^{\ast},F^{\ast\ast})$ (see \cite[Proposition 3.8.7]{meyer-nie}), and by $\kappa:E\widetilde{\otimes}_{\vert{\varepsilon}\vert}F\longrightarrow E\widetilde{\otimes}_{\vert{\varepsilon}\vert}F^{\ast\ast}$ the Riesz isometry obtained as the tensor product of the identity on $E$ and the bidual embedding $F\subseteq F^{\ast\ast}$ (see \cite[Theorem 5.1]{order-reasonable}). It is easy to verify that the diagram
    \[\begin{tikzcd}
	{E^{\ast}\widetilde{\otimes}_{\vert{\pi}\vert}F^{\ast}} && {\mathscr{L}_{r}(E^{\ast},F^{\ast\ast})^{\ast}} \\
	\\
	{\left(E\widetilde{\otimes}_{\vert{\varepsilon}\vert}F\right)^{\ast}} && {\left(E\widetilde{\otimes}_{\vert{\varepsilon}\vert}F^{\ast\ast}\right)^{\ast}}
	\arrow["\xi", from=1-1, to=1-3]
	\arrow["\iota"', from=1-1, to=3-1]
	\arrow["{\eta^{\ast}}", from=1-3, to=3-3]
	\arrow["{\kappa^{\ast}}", from=3-3, to=3-1]
\end{tikzcd}\]
commutes—just check on simple tensors, extend the equality to $E^{\ast}\otimes F^{\ast}$ using linearity, and then use density of $E^{\ast}\otimes F^{\ast}$. In view of Goldstine's theorem and weak-star to weak-star continuity of $\kappa^{\ast}\circ \eta^{\ast}$, the above diagram implies $\mathrm{ran}(\iota)$ is weak-star boundedly dense in $\mathrm{ran}(\kappa^{\ast}\circ\eta^{\ast})$, so it suffices to show surjectivity of $\kappa^{\ast}\circ\eta^{\ast}=(\eta\circ\kappa)^{\ast}$. But $\eta\circ\kappa$ is an isometry, and thus has a surjective adjoint.
\par
Returning to our original problem, weak unconditional Cauchyness of $\sum_{n=1}^{\infty}e_{n}\otimes f_{n}$, it suffices to show that
\begin{equation}\label{eq:wuc-iff}
    \sum_{n=1}^{\infty}\vert{\langle{e_{n}\otimes f_{n},v}\rangle}\vert<\infty
\end{equation}
for all $v\in (E\widetilde{\otimes}_{\vert{\varepsilon}\vert}F)^{\ast}$. Because $\mathrm{ran}(\iota)$ is weak-star boundedly dense, it suffices to show the existence of an absolute constant $C>0$ with
\begin{equation}\label{eq:stable-under-weak-star-convergence}
    \forall N\in\mathbb{N}:\sum_{n=1}^{N}\vert{\langle{e_{n}\otimes f_{n},v}\rangle}\vert\leq C\left(\Vert{v}\Vert_{(E\widetilde{\otimes}_{\vert{\varepsilon}\vert}F)^{\ast}}+1\right)
\end{equation}
for all $v\in\mathrm{ran}(\iota)$. Indeed, if each $v_{\alpha}$ in a weak-star bounded net $(v_{\alpha})_{\alpha}\subseteq \mathrm{ran}(\iota)$ satisfies (\ref{eq:stable-under-weak-star-convergence}) for some absolute constant $C>0$, and $(v_{\alpha})_{\alpha}$ weak-star converges to $v\in (E\widetilde{\otimes}_{\vert{\varepsilon}\vert}F)^{\ast}$, then $v$ also satisfies (\ref{eq:stable-under-weak-star-convergence}) due to weak-star lower semicontinuity of the norm; if $v$ satisfies (\ref{eq:stable-under-weak-star-convergence}), $v$ must also satisfy (\ref{eq:wuc-iff}).
\par
Because $\Vert{\iota^{-1}}\Vert\leq\lambda<\infty$, (\ref{eq:stable-under-weak-star-convergence}) is equivalent to the existence of an absolute constant $C'>0$ with
\begin{equation*}
    \sum_{n=1}^{N}\vert{\langle{e_{n}\otimes f_{n},\iota(u)}\rangle}\vert\leq C'\left(\Vert{u}\Vert_{\vert{\pi}\vert}+1\right)
\end{equation*}
for all $N\in\mathbb{N}$ and $u\in E^{\ast}\widetilde{\otimes}_{\vert{\pi}\vert} F^{\ast}$. For all $n$, $\vert{\langle{e_{n}\otimes f_{n},\iota(u)}\rangle}\vert\leq\langle{e_{n}\otimes \vert{f_{n}\vert},\iota(\vert{u\vert})}\rangle$. Thus, by replacing $u$ and the $f_{n}$'s with their respective moduli, we may assume that $u\geq 0$ and $f_{n}\geq0$ for all $n$.
\par
Applying \cite[Proposition 6.1]{schauder-fremlin}, there exists $(x^{\ast}_{n})_{n=1}^{\infty}\subseteq E^{\ast}_{+}\cap S_{E^{\ast}}$, $(y^{\ast}_{n})_{n=1}^{\infty}\subseteq F^{\ast}_{+}\cap S_{F^{\ast}}$, and $\lambda\in \ell_{1}\cap \mathbb{R}^{\mathbb{N}}_{+}$ such that
\begin{equation*}
    \Vert{\lambda}\Vert_{\ell_{1}}=\sum_{n=1}^{\infty}\lambda_{n}=\sum_{n=1}^{\infty}\lambda_{n}\Vert{x^{\ast}_{n}}\Vert_{E^{\ast}}\Vert{y^{\ast}_{n}}\Vert_{F^{\ast}}\leq\Vert{u}\Vert_{\vert{\pi}\vert}+1
\end{equation*}
and
\begin{equation*}
    0\leq u\leq\sum_{n=1}^{\infty}\lambda_{n}x^{\ast}_{n}\otimes y^{\ast}_{n}.
\end{equation*}
In particular, for each $N\in\mathbb{N}$,
\begin{equation*}
    \sum_{n=1}^{N}\vert{\langle{e_{n}\otimes f_{n},\iota(u)}\rangle}\vert\leq\sum_{n=1}^{N}\sum_{i=1}^{\infty}\lambda_{i}\langle{e_{n},x^{\ast}_{i}}\rangle\langle{f_{n},y^{\ast}_{i}}\rangle\leq\sum_{n=1}^{N}\sum_{i=1}^{\infty}\lambda_{i}\langle{e_{n},x^{\ast}_{i}}\rangle
\end{equation*}
\begin{equation}\label{eq:last-pi1}
    =\sum_{n=1}^{N}\Vert{S(e_{n})}\Vert_{\ell_{1}}\leq\pi_{1}(S)C'
\end{equation}
where $S:E\longrightarrow\ell_{1}$ is defined by $S(x)=(\lambda_{n}\langle{x,x^{\ast}_{n}}\rangle)_{n=1}^{\infty}$ for each $x\in E$, $\pi_{1}$ denotes the $1$-summing norm (allowed to take the value $\infty$), and
\begin{equation*} C'=\sup_{N\in\mathbb{N}}\sup_{x^{\ast}\in B_{E^{\ast}}}{\sum_{n=1}^{N}}\vert{\langle{e_{n},x^{\ast}}\rangle}\vert<\infty
\end{equation*}
where finiteness follows easily from the closed graph theorem. But obviously $\pi_{1}(S)\leq\Vert{\lambda}\Vert_{\ell_{1}}\leq\Vert{u}\Vert_{\vert{\pi}\vert}+1<\infty$, and therefore (\ref{eq:last-pi1}) yields
\begin{equation*}
    \sum_{n=1}^{N}\vert{\langle{e_{n}\otimes f_{n},\iota(u)}\rangle}\vert\leq C'\left(\Vert{u}\Vert_{\vert{\pi}\vert}+1\right)
\end{equation*}
for all $N\in\mathbb{N}$, as desired.
\end{proof}
Theorem \ref{thm:wittstock-wucs} assumes the $e_{n}$'s are positive, and as we shall now see, this assumption cannot be weakened in the context of Theorem \ref{thm:wittstock-wucs}. Let $(\Omega,\mathscr{F},\mathbb{P})$ be a non-atomic probability space, and denote by $L^{p}=L^{p}(\Omega,\mathscr{F},\mathbb{P})$ the associated space of $p$-integrable random variables.
\begin{proposition}\label{prop:sharp-wucs}
    Let $p\in[1,\infty)$ and $q\in[1,2)$. There exists a sequence $(e_{n})_{n=1}^{\infty}\subseteq L^{p}$ with $\sum_{n}e_{n}$ unconditionally convergent and $(f_{n})_{n=1}^{\infty}\subseteq B_{\ell_{q}}$ such that the following holds. The series $\sum_{n}e_{n}\otimes f_{n}$ is not weakly unconditionally Cauchy in $L^{p}\widetilde{\otimes}_{\vert{\varepsilon}\vert}\ell_{q}$.
\end{proposition}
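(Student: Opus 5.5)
The plan is to take Rademacher functions with square-summable but not $q$-summable coefficients, paired with the unit vector basis of $\ell_{q}$. Khintchine's inequality makes $\sum_n e_n$ unconditionally convergent in $L^{p}$. The Wittstock norm, by contrast, only sees the moduli $\vert{e_n}\vert$, and these are constant multiples of $\mathbf{1}$.

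\textbf{Construction.} Let $(r_n)_{n=1}^{\infty}$ be a Rademacher sequence on the non-atomic space $(\Omega,\mathscr{F},\mathbb{P})$. Since $q<2$, fix a scalar sequence $a=(a_n)_{n=1}^{\infty}\in\ell_{2}\setminus\ell_{q}$, for instance $a_n=n^{-1/q}$. Put $e_n=a_nr_n$ and $f_n=\delta_n$, the unit vector basis of $\ell_q$, so that $(f_n)\subseteq B_{\ell_q}$. For $p<\infty$, Khintchine's inequality shows that $\Vert{\sum_{n\in A}\epsilon_na_nr_n}\Vert_{L^p}$ is equivalent to $(\sum_{n\in A}a_n^2)^{1/2}$, uniformly in finite sets $A$ and signs $\epsilon_n$. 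Hence $\sum_n e_n$ is unconditionally convergent in $L^p$; this also holds for $p=1$.

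\textbf{Computing the Wittstock norm of the partial sums.} Set $u_N=\sum_{n=1}^{N}e_n\otimes\delta_n$. By \cite[Proposition 3.8.7]{meyer-nie}, the $\vert{\varepsilon}\vert$-norm of $u_N$ equals the regular norm of the associated finite rank operator
\begin{equation*}
T_N:\ell_{q'}\longrightarrow L^p,\qquad T_N(b)=\sum_{n=1}^{N}b_na_nr_n,
\end{equation*}
where $q'$ is the conjugate exponent. (By symmetry of the tensor product one may equally view $u_N$ as an operator $(L^p)^{\ast}\to\ell_q$; I would use whichever form makes the modulus formula cleanest.)

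Since $L^p$ is Dedekind complete, $\Vert{T_N}\Vert_r=\Vert{\vert{T_N}\vert}\Vert$. By the Riesz--Kantorovich formula, and because $T_N$ factors through the first $N$ coordinates (an atomic lattice), its modulus is
\begin{equation*}
\vert{T_N}\vert(b)=\sum_{n=1}^{N}b_n\vert{a_n}\vert\,\vert{r_n}\vert=\Bigl(\sum_{n=1}^{N}b_n\vert{a_n}\vert\Bigr)\mathbf{1}.
\end{equation*}
It follows that
\begin{equation*}
\Vert{u_N}\Vert_{\vert{\varepsilon}\vert}=\sup_{b\in B_{\ell_{q'}}}\Bigl\vert{\sum_{n=1}^{N}b_n\vert{a_n}\vert}\Bigr\vert=\Bigl(\sum_{n=1}^{N}\vert{a_n}\vert^{q}\Bigr)^{1/q}.
\end{equation*}
The right-hand side tends to $\infty$ as $N\to\infty$ because $a\notin\ell_q$. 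I expect this identification of the Wittstock norm with $\Vert{\vert{T_N}\vert}\Vert$, together with the exact form of the modulus, to be the only step requiring care.

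\textbf{Conclusion.} A weakly unconditionally Cauchy series has norm-bounded partial sums. Indeed, for each $v$ in the dual, $\sup_N\vert{\langle{u_N,v}\rangle}\vert\leq\sum_n\vert{\langle{e_n\otimes f_n,v}\rangle}\vert<\infty$, so the uniform boundedness principle gives $\sup_N\Vert{u_N}\Vert<\infty$. Since $\Vert{u_N}\Vert_{\vert{\varepsilon}\vert}\to\infty$, the series $\sum_ne_n\otimes f_n$ is not weakly unconditionally Cauchy in $L^p\widetilde{\otimes}_{\vert{\varepsilon}\vert}\ell_q$. This is exactly the statement of Proposition \ref{prop:sharp-wucs}.
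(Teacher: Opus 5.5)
Your proof is correct and follows the paper's argument. Both use the same Rademacher construction with $a\in\ell_2\setminus\ell_q$ and the unit vectors of $\ell_q$, the same blow-up $\Vert{\sum_{n\le N}e_n\otimes f_n}\Vert_{\vert{\varepsilon}\vert}=(\sum_{n\le N}a_n^q)^{1/q}\to\infty$, and the same Banach--Steinhaus conclusion. The only difference is how the norm is justified: the paper cites Bu--Wong's formula $\vert{\sum e_n\otimes f_n}\vert=\sum\vert{e_n}\vert\otimes f_n=\mathbf{1}_\Omega\otimes\sum a_nf_n$ and uses that $\vert{\varepsilon}\vert$ is a lattice cross norm, whereas you compute the regular norm of the associated operator via Riesz--Kantorovich, relying only on the isometric embedding into $\mathscr{L}_r$ already quoted in the preliminaries.
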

\begin{proof}
    Take any positive sequence $(a_{n})_{n=1}^{\infty}\in\ell_{2}$ which is not contained in $\ell_{q}$. Let $(\varepsilon_{n})_{n=1}^{\infty}$ be an i.i.d. sequence of Rademacher random variables; such a sequence exists because $\mathbb{P}$ is non-atomic. Defining $e_{n}=a_{n}\varepsilon_{n}$ for each $n\in\mathbb{N}$, the Khintchine inequality implies the associated series $\sum_{n=1}^{\infty}e_{n}$ is unconditionally convergent in $L^{p}$.
    \par
    Let $(f_{n})_{n=1}^{\infty}\subseteq B_{\ell_{q}}$ be the sequence of coordinate vectors. In view of \cite[Theorem 5.2]{geom-bu},
    \begin{equation*}
        \left\vert{\sum_{n=1}^{N}e_{n}\otimes f_{n}}\right\vert=\sum_{n=1}^{N}\vert{e_{n}}\vert\otimes f_{n}=\sum_{n=1}^{N}\left(a_{n}\mathbf{1}_{\Omega}\right)\otimes f_{n}=\mathbf{1}_{\Omega}\otimes\left(\sum_{n=1}^{N}a_{n}f_{n}\right)
    \end{equation*}
    in $L^{p}\widetilde{\otimes}_{\vert{\varepsilon}\vert}\ell_{q}$. Thus, since the Wittstock norm is a reasonable cross norm and a lattice norm,
    \begin{equation*}
        \left\Vert{\sum_{n=1}^{N}e_{n}\otimes f_{n}}\right\Vert_{\vert{\varepsilon}\vert}=\Vert{\mathbf{1}_{\Omega}}\Vert_{L^{p}}\left\Vert{\sum_{n=1}^{N}a_{n}f_{n}}\right\Vert_{\ell_{q}}=\left(\sum_{n=1}^{N}a^{q}_{n}\right)^{1/q}\to\infty
    \end{equation*}
    as $N\to\infty$, because $(a_{n})_{n=1}^{\infty}\notin\ell_{q}$ by stipulation. In particular, the Banach-Steinhaus theorem shows that $\sum_{n=1}^{\infty}e_{n}\otimes f_{n}$ cannot be weakly unconditionally Cauchy in $L^{p}\widetilde{\otimes}_{\vert{\varepsilon}\vert}\ell_{q}$, as desired.
\end{proof}
\section{Proof of Theorem \ref{thm:main}}\label{sec:proof-main}
\begin{proof}[Proof of Theorem \ref{thm:main}]
    We begin with the first part of Theorem \ref{thm:main}. Because $E$ contains a $c_{0}$ isomorph, $E$ contains $c_{0}$ as a closed vector sublattice $E_{0}\subseteq E$ (see Theorem 2.5.6 and Theorem 2.4.12 of \cite{meyer-nie}). Let $(e_{n})_{n=1}^{\infty}\subseteq E_{+}\cap E_{0}$ be the corresponding $c_{0}$ basis. In view of Lotz's theorem \cite{lotz}, we may find positive extensions $(e^{\ast}_{n})_{n=1}^{\infty}\subseteq E^{\ast}_{+}$ of the coordinate functionals of $(e_{n})_{n=1}^{\infty}$, with $(e^{\ast}_{n})_{n=1}^{\infty}$ an $M$-bounded sequence for some $M>0$.
    \par
    By the Josefson-Nissenzweig theorem, there exists a weak-star null $(f^{\ast}_{n})_{n=1}^{\infty}\subseteq S_{F^{\ast}}$. Using \cite[Remark III.1]{rotsenthalwill}, we may find a sequence $(f_{n})_{n=1}^{\infty}\subseteq 2B_{F}$ satisfying the biorthogonality condition $\langle{f_{n},f^{\ast}_{m}}\rangle=\delta_{n,m}$.
    \par
    Define an operator $T:E\widetilde{\otimes}_{\vert{\varepsilon}\vert}F\longrightarrow\ell_{\infty}$ by
    \begin{equation*}
        T(\xi)=\left(\langle{\xi,e^{\ast}_{n}\otimes f^{\ast}_{n}}\rangle\right)_{n=1}^{\infty}\in\ell_{\infty}.\footnote{A similar construction is often used for the ordinary injective tensor product; see \cite{kancstar,cembranos}.}
    \end{equation*}
    For each $x\in E$ and $y\in F$,
    \begin{equation*}
        0\leq\limsup_{n\to\infty}\vert{T(x\otimes y)}_{n}\vert\leq\limsup_{n\to\infty}\vert{\langle{x,e^{\ast}_{n}}\rangle}{\langle{y,f^{\ast}_{n}}\rangle}\vert\leq M\Vert{x}\Vert_{E}\limsup_{n\to\infty}\vert{{\langle{y,f^{\ast}_{n}}\rangle}}\vert=0,
    \end{equation*}
    and $T$ must therefore take values in $c_{0}\subseteq \ell_{\infty}$.
    \par
    In view of Theorem \ref{thm:wittstock-wucs}, $\sum_{n}e_{n}\otimes f_{n}$ is weakly unconditionally Cauchy in $E\widetilde{\otimes}_{\vert{\varepsilon}\vert}F$. Biorthogonality implies $(T(e_{n}\otimes f_{n}))_{n=1}^{\infty}$ is the canonical $c_{0}$-basis, and therefore $T$ cannot be unconditionally converging. In view of \cite[Lemma 2]{polynom-uc}, $E\widetilde{\otimes}_{\vert{\varepsilon}\vert}F$ thus contains a complemented copy of $c_{0}$.
    \par
    As for the second part of Theorem \ref{thm:main}, remark that if $F\in\mathfrak{JN}^{+}$, then we can choose for the $f^{\ast}_{n}$'s above to be positive. Thus, because the $e^{\ast}_{n}$'s and the $f^{\ast}_{n}$'s are contained in $E^{\ast}_{+}$ and $F^{\ast}_{+}$ respectively, $T$ is positive. If $E\widetilde{\otimes}_{\vert{\varepsilon}\vert}F$ had the positive Grothendieck property, every positive operator $E\widetilde{\otimes}_{\vert{\varepsilon}\vert}F\longrightarrow c_{0}$ would be weakly compact (see \cite[p. 6]{posit-gro-property-subsets}), so $T$ would be weakly compact. The Orlicz-Pettis theorem now implies $T$ is unconditionally converging, but this contradicts the fact that $T$ is not unconditionally converging.
\end{proof}
\printbibliography

@article{cembranos,
    author = {Pilar Cembranos},
    title = {{${C(K,E)}$ Contains a Complemented Copy of $c_{0}$}},
    journal = {Proceedings of the American Mathematical Society},
    year = {1984},
    volume = {91},
    pages = {556-558}
}

@book{introtensor,
    author = {Raymond Ryan},
    title = {{Introduction to Tensor Products of {Banach} Spaces}},
    publisher = {Springer},
    year = {2002}
}

@article{kancstar,
    author = {Tomasz Kania},
    title = {{On ${C^{\ast}}$-Algebras Which Cannot be Decomposed Into Tensor Products With Both Factors Infinite-Dimensional}},
    journal = {The Quarterly Journal of Mathematics},
    year = {2015},
    volume = {66},
    pages = {1063-1068}
}

@article{grospace,
    author = {Manuel González and Tomasz Kania},
    title = {{Grothendieck Spaces: The Landscape and Perspectives}},
    journal = {Japanese Journal of Mathematics},
    year = {2021},
    volume = {16},
    pages = {247-313}
}

@article{czechgro,
    author = {Donghai Ji and Xiaoping Xue and Qingying Bu},
    title = {{The {Grothendieck} Property for Injective Tensor Products of {Banach} Spaces}},
    journal = {Czechoslovak Mathematical Journal},
    year = {2010},
    volume = {60},
    pages = {1153-1159}
}

@article{rotsenthalwill,
    author = {William Johnson and Haskell Rosenthal},
    title = {{On $w^{\ast}$-Basic Sequences and Their Applications to the Study of {Banach} Spaces}},
    journal = {Studia Mathematica},
    year = {1972},
    volume = {43},
    pages = {77-92}
}

@book{rabiger-gro-ban-latt,
    author = {Frank Räbiger},
    title = {{Beiträge Zur Strukturtheorie der Grothendieck-Räume}},
    publisher = {Springer},
    year = {1985}
}

@article{farah-calkin,
    author = {Ilijas Farah},
    title = {{All Automorphisms of the Calkin Algebra Are Inner}},
    journal = {Annals of Mathematics},
    year = {2011},
    volume = {173},
    pages = {619-661}
}

@article{weaver-phil-calkin,
    author = {Christopher Phillips and Nik Weaver},
    title = {{The Calkin Algebra Has Outer Automorphisms}},
    journal = {Duke Mathematical Journal},
    year = {2007},
    volume = {139},
    pages = {185-202}
}

@article{ghasemi,
    author = {Saeed Ghasemi},
    title = {{$SAW^{\ast}$-Algebras Are Essentially Non-Factorizable}},
    journal = {Glasgow Mathematical Journal},
    year = {2015},
    volume = {57},
    pages = {1-5}
}

@article{ban-latt-gro-lp,
    author = {Qingying Bu and Michelle Craddock and Donghai Ji},
    title = {{Reflexivity and the Grothendieck Property for Positive Tensor Products of Banach Lattices-I}},
    journal = {Positivity},
    year = {2010},
    volume = {14},
    pages = {59-68}
}

@article{ban-latt-gro-lphi,
    author = {Qingying Bu and Michelle Craddock and Donghai Ji},
    title = {{Reflexivity and the Grothendieck Property for Positive Tensor Products of Banach Lattices-II}},
    journal = {Quaestiones Mathematicae},
    year = {2009},
    volume = {32},
    pages = {339-350}
}

@article{order-reasonable,
    author = {Coenraad C. A. Labuschagne},
    title = {{Riesz Reasonable Cross Norms on Tensor Products of Banach Lattices}},
    journal = {Quaestiones Mathematicae},
    year = {2004},
    volume = {27},
    pages = {243-266}
}

@article{kalton-regular-compact,
    author = {Qingying Bu},
    title = {{On Kalton's Theorem for Regular Compact Operators and Grothendieck Property for Positive Projective Tensor Products}},
    journal = {Proceedings of the American Mathematical Society},
    year = {2020},
    pages = {2459-2467},
    volume = {148}
}

@article{reflex-regular,
    author = {Yongjin Li and Qingying Bu},
    title = {{Reflexivity for Spaces of Regular Operators on Banach Lattices}},
    journal = {Proceedings of the American Mathematical Society},
    year = {2022},
    volume = {150},
    pages = {4811-4818}
}

@article{geom-bu,
    author = {Qingying Bu and Ngai-Ching Wong},
    title = {{Some Geometric Properties Inherited by the Positive Tensor Products of Atomic Banach Lattices}},
    journal = {Indagationes Mathematicae},
    year = {2012},
    volume = {23},
    pages = {199-213}
}

@article{type-tensor,
    author = {Qingying Bu and Patrick N. Dowling},
    title = {{On Type and Cotype of Tensor Products of Banach Spaces}},
    journal = {Journal of Mathematical Analysis and Applications},
    year = {2026},
    volume = {553}
}

@article{fremlin-ban-lat,
    author = {David Fremlin},
    title = {{Tensor Products of Banach Lattices}},
    journal = {Mathematische Annalen},
    year = {1974},
    pages = {87-106},
    volume = {211}
}

@article{posit-gro-property-subsets,
    author = {Pablo Galindo and Vinícius C.C. Miranda},
    title = {{Grothendieck-Type Subsets of Banach Lattices}},
    journal = {Journal of Mathematical Analysis and Applications},
    year = {2022},
    volume = {506}
}

@article{blanco,
    author = {Ariel Blanco},
    title = {{On the Positive Approximation Property}},
    journal = {Positivity},
    year = {2016},
    pages = {719-742},
    volume = {20}
}

@article{schauder-fremlin,
    author = {Qingying Bu and Gerard Buskes},
    title = {{Schauder Decompositions and the Fremlin Projective Tensor Product of Banach Lattices}},
    journal = {Journal of Mathematical Analysis and Applications},
    year = {2009},
    pages = {335-351},
    volume = {355}
}

@book{meyer-nie,
    author = {Peter Meyer-Nieberg},
    title = {{Banach Lattices}},
    publisher = {Springer},
    year = {1991}
}

@article{lotz,
    author = {Heinrich Lotz},
    title = {{Extensions and Liftings of Positive Linear Mappings on Banach Lattices}},
    journal = {Transactions of the American Mathematical Society},
    year = {1975},
    pages = {85-100},
    volume = {211}
}

@article{positive-jn,
    author = {Marek Wójtowicz},
    title = {{A Banach-Lattice Version of the Josefson-Nissenzweig Theorem}},
    journal = {Indagationes Mathematicae},
    year = {2007},
    pages = {479-484},
    volume = {18}
}

@article{pos-approx-property,
    author = {Niels Jorgen Nielsen},
    title = {{The Positive Approximation Property of Banach Lattices}},
    journal = {Israel Journal of Mathematics},
    year = {1988},
    volume = {62},
    pages = {99-112}
}

@article{no-ap-ban-latt,
    author = {Andrzej Szankowski},
    title = {{A Banach Lattice Without the Approximation Property}},
    journal = {Israel Journal of Mathematics},
    year = {1976},
    volume = {24},
    pages = {329-337}
}

@article{groth,
    author = {Alexander Grothendieck},
    title = {{Sur les Applications Linéaires Faiblement Compactes d’Espaces du type $C(K)$}},
    journal = {Canadian Journal of Mathematics},
    year = {1953},
    volume = {5},
    pages = {129-173}
}

@article{tensor-diag,
    author = {James Holub},
    title = {{Tensor Product Bases and Tensor Diagonals}},
    journal = {Transactions of the American Mathematical Society},
    year = {1970},
    volume = {151},
    pages = {563-579}
}

@article{polynom-uc,
    author = {Manuel González and Joaquín M. Gutiérrez},
    title = {{When Every Polynomial is Unconditionally converging}},
    journal = {Archiv der Mathematik},
    year = {1994},
    volume = {63},
    pages = {145-151}
}

@article{oikh-ap,
    author = {Timur Oikhberg},
    title = {{Geometry of Unit Balls of Free Banach Lattices, and its Applications}},
    journal = {Journal of Functional Analysis},
    year = {2024},
    volume = {286}
}
\end{document}